\documentclass{amsart}
\date{\today}
\title[Characteristic forms]%
{\texorpdfstring%
{Characteristic forms of complex Cartan geometries III: $G$-structures}%
{Characteristic forms of complex Cartan geometries III: G-structures}}
\newcommand{\authorsname}{\texorpdfstring{Benjamin \scotsMc{}Kay}{Benjamin McKay}}
\author{\authorsname}
\address{University College Cork}
\email{b.mckay@ucc.ie}
\thanks{Thanks to Francesco Russo for his invitation to carry out this work at the University of Catania.}
\keywords{complex projective manifold, G-structure}
\subjclass[2000]{Primary 53B21; Secondary 53C56, 53A55}
\usepackage{lmodern}
\usepackage[T2A,T1]{fontenc}
\usepackage[utf8]{inputenc}
\usepackage[tt={variable=false}]{cfr-lm}
\usepackage{xparse}
\usepackage{mathtext}
\usepackage{mathrsfs}
\usepackage[english]{babel}
\usepackage{varioref}
\usepackage[pdftex,pagebackref]{hyperref}
\hypersetup{
colorlinks   = true,  
urlcolor     = black, 
linkcolor    = black, 
citecolor    = black  
}
\usepackage[kerning=true,tracking=true]{microtype}
\usepackage{xspace}
\usepackage{amsfonts}
\usepackage{verbatim}
\usepackage{amsthm}
\usepackage{amssymb}
\usepackage{mathtools}
\usepackage{mathabx}
\usepackage{braket}
\usepackage{cool}
\usepackage{xstring}
\usepackage{array}
\usepackage{ragged2e}
\usepackage{longtable}
\usepackage{multirow}
\usepackage{booktabs}
\usepackage{bm}
\usepackage{tikz-cd}
\usepackage{varwidth}
\usepackage{tensor}
\usepackage[super]{nth}
\usepackage{scalerel}
\usepackage{rank-2-roots}
\usepackage{dynkin-diagrams}
\usepackage{cleveref}

\numberwithin{equation}{section}

\makeatletter
\DeclareRobustCommand{\scotsMc}{\scotsMcx{c}}
\DeclareRobustCommand{\scotsMC}{\scotsMcx{\textsc{c}}}
\DeclareRobustCommand{\scotsMcx}[1]{%
  M%
  \raisebox{\dimexpr\fontcharht\font`M-\height}{%
    \check@mathfonts\fontsize{\sf@size}{0}\selectfont
    \kern.3ex\underline{\kern-.3ex #1\kern-.3ex}\kern.3ex
  }%
}
\expandafter\def\expandafter\@uclclist\expandafter{%
  \@uclclist\scotsMc\scotsMC
}
\makeatother

\newtheorem{theorem}{Theorem}
\newtheorem{corollary}{Corollary}
\newtheorem{lemma}{Lemma}

\theoremstyle{remark}

\newtheorem{example}{Example}

\newcounter{remarkCounter}
\setcounter{remarkCounter}{1}

%

\NewDocumentCommand\pr{m}{\ensuremath{\left(#1\right)}}
\NewDocumentCommand\of{m}{\ensuremath{\!\pr{#1}}}
\RenewDocumentCommand\C{o}%
{%
	\ensuremath%
	{\IfValueTF{#1}{\mathbb{C}^{#1}}{\mathbb{C}}}%
}%
\NewDocumentCommand\OO{D(){0}O{1}}%
{
\ensuremath{
\mathcal{O}%
\ifnum\pdfstrcmp{#1}{0}=0{}\else{(#1)}\fi
\ifnum\pdfstrcmp{#2}{1}=0{}\else{
	\ifnum\pdfstrcmp{#2}{0}=0{}\else{^{\oplus #2}}\fi
}\fi
}
}
\ifdefined\Proj
\RenewDocumentCommand\Proj{m}{\ensuremath{\mathbb{P}^{#1}}}
\else
\NewDocumentCommand\Proj{m}{\ensuremath{\mathbb{P}^{#1}}}
\fi
\NewDocumentCommand\Sym{mm}{\ensuremath{\operatorname{Sym}^{#1}\!\of{#2}}}
\NewDocumentCommand\GL{m}{\ensuremath{\operatorname{GL}_{#1}}}
\NewDocumentCommand\Lie{mo}{\ensuremath{\mathfrak{\MakeLowercase{#1}}\IfValueT{#2}{_{#2}}}}
\NewDocumentCommand\LieGL{m}{\Lie{GL}[#1]}
\NewDocumentCommand\SL{m}{\ensuremath{\operatorname{SL}_{#1}}}

\NewDocumentCommand\SU{m}{\ensuremath{\operatorname{SU}_{#1}}}

\NewDocumentCommand\nForms{omm}
{%
\IfValueTF{#1}
{\ensuremath{\vb{#1}^{#2}_{#3}}}
{\ensuremath{\Omega^{#2}_{#3}}}
}%
\NewDocumentCommand\Lm{smm}{\ensuremath{\Lambda^{#2}\IfBooleanTF{#1}{\!\left(#3\right)}{#3}}}
\NewDocumentCommand\cohomology{omm}{\ensuremath{H_{\IfValueT{#1}{\IfStrEq{#1}{db}{\bar\partial}{#1}}}^{#2}\of{#3}}}
\DeclareMathOperator{\Ad}{Ad}
\DeclareMathOperator{\ad}{ad}
\NewDocumentCommand\LieDer{}{\ensuremath{\mathcal L}}
\NewDocumentCommand\hook{}{\ensuremath{\mathbin{ \hbox{\vrule height1.4pt
        width4pt depth-1pt \vrule height4pt width0.4pt depth-1pt}}}}

\NewDocumentCommand\defeq{}{\coloneqq}

\NewDocumentCommand\smooth{}{\ensuremath{C^{\infty}}\xspace}
\NewDocumentCommand\MakeLie{m}{\expandafter\def\csname Lie#1\endcsname{\Lie{#1}}}
\def\lst{A,B,G,H,K,L,P,Q,R,S,T,U,Z}
\makeatletter
\@for\i:=\lst\do{\expandafter\MakeLie \i}
\makeatother
\NewDocumentCommand\lb{smm}%
{%
\IfBooleanTF{#1}%
{
\ensuremath{\left[{#2},{#3}\right]}
}
{
\ensuremath{[{#2}{#3}]}
}
}%
\NewDocumentCommand\prol{omo}{#2^{(\IfValueTF{#1}{#1}{1})\IfValueT{#3}{#3}}}
\NewDocumentCommand\SpC{m}{\cohomology{0,2}{\Lie{#1}}}

\NewDocumentCommand\At{o}{\operatorname{At}_{\IfValueT{#1}{#1}}}
\newcommand\varul[2][3]{\mkern#1mu\underline{\mkern-#1mu#2\mkern-#1mu}\mkern#1mu}
\NewDocumentCommand\qg{}{\varul[2]{γ}}
\NewDocumentCommand\qx{}{\varul[2]{x}}

\NewDocumentCommand\qE{}{\varul{E}}

\NewDocumentCommand{\Dlb}{sm}%
{%
\IfBooleanTF{#1}%
{
\Dlb{\left(#2\right)}%
}
{
#2^{\bar\partial}%
}
}%
\NewDocumentCommand{\Cch}{sm}%
{%
\IfBooleanTF{#1}%
{
\left(#2\right)^{\vee}%
}
{
\check{#2}%
}
}%
\NewDocumentCommand\Slovak{}{Slov\'ak}

\NewDocumentCommand\cpx{om}{\ensuremath{#2_{\bullet\IfValueT{#1}{-#1}}}}
\NewDocumentCommand\gLm{omom}%
{%
\IfValueTF{#1}%
{
{\varul[4]{#1}}%
}
{
\varul{\Lambda}%
}
^{#2,\IfValueTF{#3}{#3,#4}{#4}}%
}%

\NewDocumentCommand\gForms{omom}%
{%
\IfValueTF{#1}%
{
{\varul[4]{\vb{#1}}}%
}
{
\varul{\Omega}%
}
^{#2,\IfValueTF{#3}{#3,#4}{#4}}%
}%

\NewDocumentCommand\ch{mo}{\operatorname{ch}_{#1}\IfValueT{#2}{\!\left(#2\right)}}

\newcommand{\amal}[3]{\ensuremath{{#1} \mathbin{\times^{#2}} \! #3}}
\NewDocumentCommand\vb{m}{\ensuremath{\bm{#1}}}
\NewDocumentCommand\Conn{m}{\ensuremath{\mathscr{A}_{#1}}}
\setcounter{tocdepth}{3}
\makeatletter
\def\l@subsection{\@tocline{2}{0pt}{2.5pc}{5pc}{}}
\def\l@section{\@tocline{1}{0pt}{2.5pc}{5pc}{}}
\def\@tocline#1#2#3#4#5#6#7{\relax
  \ifnum #1>\c@tocdepth 
  \else
    \par \addpenalty\@secpenalty\addvspace{#2}%
    \begingroup \hyphenpenalty\@M
    \@ifempty{#4}{%
      \@tempdima\csname r@tocindent\number#1\endcsname\relax
    }{%
      \@tempdima#4\relax
    }%
    \parindent\z@\relax
\leftskip#3\relax \advance\leftskip\@tempdima\relax
    #5\leavevmode\hskip-\@tempdima{#6}\nobreak\relax
    ,~#7\par
    \nobreak
    \endgroup
  \fi}
\makeatother
\NewDocumentCommand\CS{mo}{T_{#1\IfValueT{#2}{,#2}}}
\NewDocumentCommand\proj{om}{\operatorname{proj}_{\IfValueT{#1}{#1}}\!\left(#2\right)}
\NewDocumentCommand\exactSeq{smO{}mO{}m}%
{%
\begin{tikzcd}[cramped, sep=small,ampersand replacement=\&] 
\IfBooleanTF{#1}{1}{0} \rar \& #2 \rar{#3} \& #4 \rar{#5} \& #6 \rar \& \IfBooleanTF{#1}{1}{0} 
\end{tikzcd}%
}%
\NewDocumentCommand\bundle{smO{}mO{}m}%
{%
\IfBooleanTF{#1}%
{%
\begin{tikzcd}[cramped, sep=small,ampersand replacement=\&] 
#2 \rar{#3} \& #4 \rar{#5} \& #6 
\end{tikzcd}%
}%
{%
\begin{tikzcd}[cramped, sep=small,ampersand replacement=\&] 
#2 \rar{#3} \& #4 \dar{#5} \\ \& #6 
\end{tikzcd}%
}%
}%
\ifdefined\DeclareUnicodeCharacter
  \DeclareUnicodeCharacter{0393}{\Gamma}         
  \DeclareUnicodeCharacter{0394}{\Delta}         
  \DeclareUnicodeCharacter{0398}{\Theta}         
  \DeclareUnicodeCharacter{039B}{\Lambda}        
  \DeclareUnicodeCharacter{039E}{\Xi}            
  \DeclareUnicodeCharacter{03A0}{\Pi}            
  \DeclareUnicodeCharacter{03A3}{\Sigma}         
  \DeclareUnicodeCharacter{03A5}{\Upsilon}       
  \DeclareUnicodeCharacter{03A6}{\Phi}           
  \DeclareUnicodeCharacter{03A8}{\Psi}           
  \DeclareUnicodeCharacter{03A9}{\Omega}         
  \DeclareUnicodeCharacter{03B1}{\alpha}         
  \DeclareUnicodeCharacter{03B2}{\beta}          
  \DeclareUnicodeCharacter{03B3}{\gamma}         
  \DeclareUnicodeCharacter{03B4}{\delta}         
  \DeclareUnicodeCharacter{03B5}{\varepsilon}    
  \DeclareUnicodeCharacter{03B6}{\zeta}          
  \DeclareUnicodeCharacter{03B7}{\eta}           
  \DeclareUnicodeCharacter{03B8}{\theta}         
  \DeclareUnicodeCharacter{03B9}{\iota}          
  \DeclareUnicodeCharacter{03BA}{\kappa}         
  \DeclareUnicodeCharacter{03BB}{\lambda}        
  \DeclareUnicodeCharacter{03BC}{\mu}            
  \DeclareUnicodeCharacter{03BD}{\nu}            
  \DeclareUnicodeCharacter{03BE}{\xi}            
  \DeclareUnicodeCharacter{03C0}{\pi}            
  \DeclareUnicodeCharacter{03C1}{\rho}           
  \DeclareUnicodeCharacter{03C2}{\varsigma}      
  \DeclareUnicodeCharacter{03C3}{\sigma}         
  \DeclareUnicodeCharacter{03C4}{\tau}           
  \DeclareUnicodeCharacter{03C5}{\upsilon}       
  \DeclareUnicodeCharacter{03C6}{\varphi}        
  \DeclareUnicodeCharacter{03C7}{\chi_}           
  \DeclareUnicodeCharacter{03C8}{\psi}           
  \DeclareUnicodeCharacter{03C9}{\omega_}         
  \DeclareUnicodeCharacter{03D1}{\thetasymbol}   
  \DeclareUnicodeCharacter{03D5}{\phisymbol}     
  \DeclareUnicodeCharacter{03D6}{\varpi_}      
  \DeclareUnicodeCharacter{03DD}{\digamma}       
  \DeclareUnicodeCharacter{03F1}{\rhosymbol}     
  \DeclareUnicodeCharacter{03F5}{\epsilonsymbol} 
\fi
\begin{document}
\maketitle
\begin{abstract}%
Characteristic class relations in Dolbeault cohomology follow from the existence of a holomorphic geometric structure (for example, holomorphic conformal structures, holomorphic Engel distributions, holomorphic projective connections, and holomorphic foliations).
These relations can be calculated directly from the representation theory of the structure group, without selecting any metric or connection or having any knowledge of the Dolbeault cohomology groups of the manifold.
This paper improves on its predecessor \cite{mckay2022} by allowing infinite type geometric structures.%
\end{abstract}
\tableofcontents
\section{Introduction}
We explain how to compute equations on Chern classes and Chern--Simons invariants of various holomorphic geometric structures on complex manifolds.
Applied to holomorphic foliations, for example, our computational recipe trivially yields the Baum--Bott theorem.
\section{Holomorphic geometric structures}
We need the notation, so we will define \(G\)-structures \cite{Gardner:1989}.
\subsection{Notation}
Denote the Lie algebra of a Lie group \(G\) as \(\LieG\), and similarly denote the Lie algebra of any Lie group by the corresponding fraktur font expression.
All Lie groups are complex analytic and finite dimensional.
All \(G\)-modules are finite dimensional and holomorphic.
Denote the left invariant Maurer--Cartan \(1\)-form on \(G\) as \(g^{-1}dg\).
Take a holomorphic right principal bundle \bundle*{G}{E}{M}.
For each vector \(v \in \LieG\), denote also by \(v\) the associated vector field on \(E\): for any \(x \in E\),
\[
v(x) = \left.\frac{d}{dt}\right|_{t=0} x \, e^{tv}.
\]
If we wish to be more precise, we denote the vector field \(v\) on \(E\) as \(v_E\).
For any \(G\)-action on a manifold \(X\), denote by \(\amal{E}{G}{X}\) the quotient of \(E \times X\) by the diagonal right \(G\)-action \((e,x)h=(eh,h^{-1}x)\).
If \(V\) is a complex analytic \(G\)-module, denote the associated vector bundle by \(\vb{V} \defeq \amal{E}{G}{V}\).
\subsection{\texorpdfstring{Infinitesimal theory of $G$-structures}{Infinitesimal theory of G-structures}}
Let us recall the usual representation theory associated to \(G\)-structures \cite{Gardner:1989}, \cite{Ivey/Landsberg:2016} chapter 8, \cite{Sternberg:1983}, in the complex analytic setting.
Suppose that \(G\) is a complex Lie group and \(V\) a finite dimensional holomorphic \(G\)-module.
For each \(a\otimes\xi\in\LieG\otimes V^*\), define \(\rho_{a\otimes \xi}\in V^*\otimes V^*\otimes V\) by
\[
\rho_{a\otimes \xi}\colon x\otimes y\in V\otimes V\mapsto \rho_{a\otimes\xi}(x,y)\defeq\xi(y)a(x)\in V
\]
and extend by complex linearity:
\[
\rho\colon \LieG\otimes V^*\to V^*\otimes V^*\otimes V.
\]
Define
\[
\delta\colon \LieG\otimes V^*\to V\otimes\Lm{2}{V}^*,
\]
by, for \(x,y\in V\), \(a\in\LieG\) and \(\xi\in V^*\),
\[
\delta_{\xi\otimes a}(x,y)=\xi(y)a(x)-\xi(x)a(y).
\]
Let \(\prol\LieG\subseteq\LieG\otimes V^*\) be the kernel of \(\delta\).
So elements of \(\prol\LieG\) are precisely elements of \(\LieG\otimes V^*\) mapped by \(\rho\) to \(V\otimes\Sym{2}{V}^*\).
We get \(\LieG\otimes V^*\) 
to act on \(V_1\defeq V\oplus\LieG\) by
\[
(a\otimes\xi)(x,b)=(x,b+\xi(x)a).
\]
The \emph{prolongation} of the \(G\)-module \(V\) is the \(G_1\)-module \(V_1\) where \(G_1\defeq G\rtimes\prol\LieG\) acting on \(V_1\) by usual action of \(G\) and by this action of \(\prol\LieG\subseteq\LieG\otimes V^*\).
Define the Spencer cohomology \(\SpC{G}\) by the exact sequence of \(G\)-modules
\[
\begin{tikzcd}
0\arrow{r}&
\prol\LieG\arrow{r}&
\LieG\otimes V^*\arrow{r}{\delta}&
V\otimes\Lm{2}{V}^*\arrow{r}{[]}&
\SpC{G}\arrow{r}&
0.
\end{tikzcd}
\]
The abelian group \(\prol\LieG\subset\LieG\otimes V^*\) acts on \(V\oplus\LieG\) by
\[
Q\in\prol\LieG,(v,A)\in V\oplus\LieG\mapsto (v,Qv+A).
\]
Let \(g\in G\) act on \(Q\in\Sym{2}{V}^*\otimes V\) by
\[
(gQ)(u,v)\defeq g(Q(g^{-1}u,g^{-1}v)),
\]
Form the semidirect product \(G\rtimes\prol\LieG\) by 
\[
(g_1,Q_1)(g_2,Q_2)\defeq(g_1g_2,Q_1+g_1Q_2).
\]

\subsection{\texorpdfstring{$G$-structures}{G-structures}}
Pick a finite dimensional complex vector space \(V\) and a complex manifold \(M\) of dimension equal to that of \(V\).
The \(V\)-valued \emph{frame bundle} of \(M\) is the set \(FM\) of all pairs \((m,u)\) of point \(m\in M\) and complex linear isomorphism \(u\colon T_m M\to V\).
Let \(\pi\colon (m,u)\in FM\to m\in M\).
The group \(\GL{V}\) acts on \(FM\) by the right action \((m,u)g=(m,g^{-1}u)\), also denoted \(r_g(m,u)\).
Clearly \(FM\) is a holomorphic principal right \(\GL{V}\)-bundle.
The \emph{soldering form} \(σ\) is the \(V\)-valued differential form whose value on a vector \(v\in T_{(m,u)} FM\) is \(v\hook σ=u(\pi'_{(m,u)}v)\), so that \(r_g^*σ=\Ad_g^{-1}σ\) for \(g\in\GL{V}\).
Differentiating, \(\LieDer_v σ = -[v,σ]\) for \(v\in\LieGL{V}\).
By the Cartan formula, \(\LieDer_v σ = v\hook dσ + d(v\hook σ)=v\hook dσ\).

Suppose that \(G\) is a complex group and that \(V\) is a \(G\)-module with representation \(\rho_V\colon G\to\GL{V}\).
We also denote by \(\rho_V\) the associated Lie algebra morphism \(\LieG\to\LieGL{V}\).
A \emph{\(G\)-structure} on \(M\) is a holomorphic right principal \(G\)-bundle \(E\to M\) together with a \(G\)-equivariant holomorphic bundle map \(E\to FM\).
If \(\rho_V\) is an embedding, \(G\)-structures can also be described as holomorphic sections of \(FM/G\to M\).

A \emph{connection covector} at a point \(e_0\in E\) is a covector \(γ\in T^*_{e_0} E\otimes\LieG\) so that \(v\hook γ=v\) for \(v\in\LieG\).
From the above,
\[
dσ=-γ\wedge σ+tσ^2
\]
for some \(t\in \Lm{2}{V}^*\otimes V\), the \emph{torsion} of the pseudoconnection, where \(\sigma^2\) means
\[
(σ^2)^{ij}=\frac{1}{2}σ^i\wedge σ^j.
\]
Any two connection covectors \(γ,γ'\) agree up to \(γ'=γ+Aσ\) where \(A\in V^*\otimes\LieG\).
The difference in torsion is \(t'=t+\delta_A\).
Therefore the projection \(T\) of \(t\) to Spencer cohomology, the \emph{torsion} of the \(G\)-structure at \(e_0\in E\), is defined independently of the choice of connection covector.
Clearly \(T\) is \(G\)-equivariant, giving a section \(T\) of the \emph{torsion bundle}: the holomorphic vector bundle \(\amal{E}{G}{\SpC{G}}\).

An \emph{anchor} for the \(G\)-structure is a section of the associated vector bundle
\[
\vb{V}\otimes\Lm{2}{\vb{V}}^*=\amal{E}{G}{\left(V\otimes\Lm{2}{V}^*\right)},
\]
which lifts the intrinsic torsion, i.e. has image \(T\) in Spencer cohomology; if an anchor exists, the \(G\)-structure is \emph{anchored} or \emph{prolongs}.
An anchor exists, for example, if that associated vector bundle has trivial first cohomology, or if the intrinsic torsion vanishes (so we can use \(0\) as anchor), or if \(G\) is reductive, since we can then lift every morphism of \(G\)-modules.
For each anchor \(t\), the \emph{associated prolongation} \(\prol{E}=\prol{E}_t\) of the \(G\)-structure consists of the set of triples \((m,u,γ)\) of point \((m,u)\in FM\) and connection covector \(γ\) at that point whose torsion agrees with the anchor \(t\).
Thinking of \(\prol\LieG\) as a complex Lie group under addition, the prolongation \(\prol{E}\) is a principal right \(G\rtimes\prol\LieG\)-bundle over \(M\) and a principal right \(\prol\LieG\)-bundle over \(E\); denote the bundle maps as \(\pi_E\colon\prol{E}\to E\) and \(\pi_M\colon\prol{E}\to M\).
Pullback the soldering form \(σ\) from \(E\), and calling it by the same name.
Define the prolongation \(1\)-form \(γ\) on \(E\) by
\[
v\hook γ_{(m_0,u_0,γ_0)}=\pi_E'(v)\hook γ_0.
\]
Note that \(\prol{E}\to E\) is a \(\prol\LieG\)-structure, the \emph{prolongation} of \(E\), mapped to the frame bundle of \(E\) by
\[
(m_0,u_0,γ_0)\mapsto \left(u_0\circ\pi_{E}'(m_0,u_0),γ_0\right).
\]
The right action of \(\prol\LieG\) on \(\prol{E}\) is
\[
(m_0,u_0,γ_0)Q\defeq (m_0,u_0,γ_0-Qσ).
\]
Under this action,
\begin{align*}
r_Q^*σ&=σ,\\
r_Q^*γ&=γ-Qσ.
\end{align*}
The group \(G\) also acts on the right on \(\prol{E}\), so that the bundle map \(\prol{E}\to E\) is equivariant, by
\[
(m_0,u_0,γ_0)g\defeq
(m_0,g^{-1}u_0,\Ad_g^{-1}\left(γ_0(r_g^{-1})'\right),
\]
giving an action of the semidirect product \(G\rtimes\prol\LieG\).
Under this action
\begin{align*}
r_{(g,Q)}^*σ&=g^{-1}σ,\\
r_{(g,Q)}^*γ&=\Ad_g^{-1}γ-Qg^{-1}σ.
\end{align*}
We can cover \(\prol{E}\) in open sets \(\prol{E}_a\) on each of which we can pick some \emph{prolongation pseudoconnection form}, a (holomorphic) \((1,0)\)-form \(ϖa\) on \(\prol{E}_a\) valued in \(\prol\LieG\), so that for \((v,q)\) in the Lie algebra of \(G\rtimes\prol\LieG\), \((v,q)\hook ϖa=q\).
Our forms \(σ,γ\) then satisfy Cartan's structure equations: 
\begin{align*}
dσ+γ\wedge σ&=tσ^2,\\
dγ+\frac{1}{2}\lb{γ}{γ}+ϖa\wedge σ&=k_aσ^2,
\end{align*}
where \(k_a\colon\prol{E}_a\to\LieG\otimes\Lm{2}{V}^*\) is the \emph{curvature}. 
On the overlaps \(E_{ab}\defeq E_a\cap E_b\), \(ϖb-ϖa=p_{ab}σ\) for a unique \(C^{\infty}\) (holomorphic) map \(p_{ab}\colon E_{ab}\to V^*\otimes\prol\LieG\).
\subsection{Langlands decomposition}
A \emph{Langlands decomposition} of a complex Lie group \(G\) is a semidirect product decomposition \(G = G_0 \ltimes G_+\) in closed complex subgroups, where \(G_+\) is a connected and simply connected solvable complex Lie group and \(G_0\) is a reductive complex linear algebraic group.
For example:
\begin{enumerate}
\item
This definition generalizes the usual Langlands decomposition of any parabolic subgroups of any complex semisimple Lie group \cite{Knapp:2002} p. 481.
\item
Every connected and simply connected complex Lie group \(G\) admits a Langlands decomposition in which \(G_0\) is a maximal semisimple subgroup and \(G_+\) is the solvradical
\cite{Varadarajan:1984} p. 244 theorem 3.18.13.
\item
Any connected complex Lie group \(G\) admits a faithful holomorphic representation just when it admits a Langlands decomposition in which \(G_0\) is a complex linearly reductive group and \(G_+\) is the nilradical
\cite{Hilgert.Neeb:2012} p. 595 theorem 16.2.7.
\item
Every complex linear algebraic group \(G\) (perhaps disconnected) admits a Langlands decomposition in which \(G_0 \subset G\) is a maximal reductive subgroup and \(G_+ \subset G\) is the unipotent radical \cite{Hochschild:1981} p. 117 theorem 4.3.
In all of our examples below, \(G\) will be complex linear algebraic.
\end{enumerate}
Every connected and simply connected solvable complex Lie group \(G_+\) is biholomorphic to complex affine space \cite{Hilgert.Neeb:2012} p. 543 theorem 14.3.8, and so is a contractible Stein manifold. 

A \emph{Langlands decomposition} of a filtered \(G\)-module \(V\) is a Langlands decomposition \(G\rtimes\prol\LieG=G_0\rtimes G_+\) so that \(G_0\subseteq G\) and \(G_+\) acts trivially on the associated graded.

\subsection{Infinitesimal characteristic forms}\label{subsec:inf.char}
Take a complex Lie group \(G\) and \(G\)-module \(V\) with a Langlands decomposition, and an \(\LieG_0\)-module \(W\), with Lie algebra action \(\rho_W \colon \LieG_0 \to \LieGL{W}\).
Each \(x=a\otimes\xi\in\LieG\otimes V^*\) and \(y\in V\) has associated \(x(y)=\xi(y)a\in \LieG\), contracting on\(V^*,V\).
Since \(\prol\LieG\subseteq\LieG\otimes V^*\), we can take any \(x\in\prol\LieG\) and \(y\in V^*\) and this defines \(x(y)\in\LieG\).
Denote by \(\proj[\LieG_0]{x(y)}\) the projection of this element of \(\LieG\) to \(\LieG_0\) by the Langlands decomposition.
The \emph{Atiyah form} \(a=a_W\) is the element
\[
a \in{\prol\LieG}^* \otimes V^* \otimes \LieGL{W}
\]
given by 
\[
a(x,y)=-\rho_W \circ \proj[\LieG_0]{x(y)},
\]
for \(x\in\prol\LieG, y\in V\).
If \(W\) is not specified, we take \(W\defeq\LieG_0\).
(Note that we do \emph{not} require that \(W\) be a \(G_0\)-module, so there might not be an associated vector bundle for a \(G\)-structure.)
The \emph{Chern forms} \(c_k\) are
\[
c_k \in \Sym{k}{\prol\LieG \otimes V}^*
\]
given by
\[
\det\left(I+\frac{ia}{2\pi}\right)=1+c_1+c_2+\dots=c.
\]
Analogously define the Chern character forms and Todd forms.
More generally, if \(f\) is an \(\LieG_0\)-invariant complex symmetric multilinear form on \(\LieG_0\), say of degree \(k\), we associate to \(f\) the element, denoted by the same name, 
\[
f \in \Sym{k}{\prol\LieG \otimes V}^*
\]
given by \(f(a,\dots,a)\).
The \emph{Chern--Simons form} of \(f\) is
\[
\CS{f}\of{u,v,w_+,w_-}\\
\defeq \sum_{j=0}^{k-1} a_j 
f(
	u,
	\underbrace{v,\dots,v}_{j},
	\underbrace{a\of{w_+,w_-},\dots,a\of{w_+,w_-}}_{k-j-1}%
)
\]
where
\[
a_j\defeq\frac{(-1)^j(k-1)!}{
(k+j)!(k-1-j)!}
\]
and
\begin{align*}
u,v&\in\LieG_0, \\
w_+&\in\prol\LieG,\\
w_-&\in V.
\end{align*}
(N.B. the expression for \(a_j\) is not the same as in the paper of Chern and Simons \cite{Chern/Simons:1974}; their \(A_j\) is \(A_j=a_j/2^j\).)
The splitting principle: if \(0 \to U \to V \to W \to 0\) is an exact sequence of \(\LieG_0\)-modules, extend a basis of \(U\) into a basis of \(V\), so 
\[
a_V=
\begin{pmatrix}
a_U & * \\
0 & a_W
\end{pmatrix}
\]
and compute the determinant: \(c(U)c(W)=c(V)\).
The \emph{tangent bundle Atiyah form} is
\[
a_T \colon x \in \prol\LieG, y,z \in V \mapsto 
\frac{a(x,y)z+a(x,z)y}{2}.
\]

\section{Characteristic classes}
\subsection{The connection bundle}\label{subsection:connection.bundle}
We review some well known material to establish notation and terminology, following the standard references \cite{Atiyah1957,Chern/Simons:1974} .
Take a holomorphic right principal bundle
\[
\bundle{G}{E}[\pi]{M}.
\]
Let \(\ad_E\defeq \amal{E}{G}{\LieG}\).
The \(G\)-invariant exact sequence
\[
\exactSeq{\ker \pi'}{TE}[\pi']{\pi^* TM}
\]
of vector bundles on \(E\) quotients by \(G\)-action to an exact sequence of vector bundles on \(M\):
\[
\exactSeq{\ad_E}{\At[E]}{TM}
\]
with middle term \(\At[E]\) the \emph{Atiyah bundle}.
A holomorphic (\(C^{\infty}\)) splitting \(s\) of this exact sequence determines and is determined by a holomorphic (\(C^{\infty}\)) \((1,0)\)-connection \(ω{}=ωs\) for the bundle \(E \to M\).
The connection form is the unique \((1,0)\)-form so that \(s(v)\hook ω{}=0\) and \(w\hook ω{}=w\) for \(w\in\LieG\), i.e. the splitting lifts each tangent vector to its horizontal lift \cite{Atiyah1957}.
Write the section as \(s=s_{ω{}}\).
The \emph{connection bundle} of \(E\) is the affine subbundle \(\Conn{E}\subset T^*M \otimes_M \At[E]\) consisting of complex linear maps which split the sequence over some point of \(M\).
So holomorphic \((C^{\infty})\) \((1,0)\)-connections are precisely holomorphic \((C^{\infty})\) sections of the connection bundle.
Differences of two connections lie in \(T^*M \otimes \ad_E\).
So \(\Conn{E} \to M\) is a holomorphic bundle of affine spaces, modelled on the vector bundle \(T^*M \otimes_M \ad_E\).
Each element \(v\in\At[E,m]\) is an \(G\)-invariant section of \(\left.TE\right|_{E_m}\to\left.\pi^*TM\right|_{E_m}\).
The holomorphic (\(C^{\infty}\)) sections of \(\Conn{E}\) are precisely the holomorphic (\(C^{\infty}\)) \((1,0)\)-connections. 
Each fiber \(\Conn{E,m}\) is the set of all \(G\)-invariant sections \(ω{}\) of
\[
\left.T^*E \otimes \LieG\right|_{E_m}
\]
so that \(v\hook ω{}=v\) for \(v\in\LieG\) with \(G\)-invariance:
\[
ω{eg}=\Ad_g^{-1} r_g^{-1*}ωe
\]
for \(g\in G\).

Denote the bundle map as \(\delta \colon \Conn{E} \to M\), with pullback
\[
\begin{tikzcd}
E\times_M\Conn{E} \rar{\Delta} \dar & E \dar \\
\Conn{E} \rar{\delta} & M.
\end{tikzcd}
\]
Each point \(x \in E\times_M\Conn{E}\) has the form \(x=(m_0,ω0,e_0)\) for some \(m_0 \in M\), \(e_0 \in E_m\), \(ω0\colon T_e E \to \LieH\) so that \(w\hook ω0=w\) for \(w \in \LieG\).
There is a holomorphic connection \(ω{}\) on \(E\times_M\Conn{E}\) defined for a tangent vector \(v \in T_x E\times_M\Conn{E}\) by \(v\hook ω{}=(\Delta'(x)v)\hook ω0\) \cite{Biswas:2017}.
Given a holomorphic \((C^{\infty})\) \((1,0)\)-connection \(ω0\) on \(E \to M\), map \(\Phi \colon e \in E \to \Phi(e)\defeq(m,γ_0,e)\in E\times_M\Conn{E}\) and compose with the bundle map \(\pi\colon E\times_M\Conn{E}\to\Conn{E}\) to get a section of the connection bundle.
Pullback the bundle \(E\times_M\Conn{E}\) by the section to get a map \(\varphi \colon E \to \Conn{E}\), so that \(\varphi^*E\times_M\Conn{E}=E\) has pullback connection \(\varphi^*ω{}=ω0\).

The connection bundle of a vector bundle is the connection bundle of its associated principal bundle.

\subsection{\texorpdfstring{Connection bundles of $G$-structures}{Connection bundles of G-structures}}\label{subsection:ConnBundlesShapes}
Take a complex Lie group \(G\), finite dimensional holomorphic \(G\)-module \(V\), and a Langlands decomposition \(G\rtimes\prol\LieG=G_0\rtimes G_+\).
Take a holomorphic anchored \(G\)-structure \(G\to E\to M\) with prolongation \(\prol{E}\to E\to M\).
Each point \(x_1\in\prol{E}\) has the form \(x_1=(m_1,u_1,γ_1)\) where \(γ_1\) is a connection covector 
\[
γ_1\colon T_{(m_1,u_1)}E\to\LieG.
\]
Write the associated point \(G_+x_1\in\qE\defeq\prol{E}/G_+\) as \(\qx_1\defeq G_+x_1\).
Take a \(G_0\)-equivariant projection \(q\colon\LieG\to\LieG_0\).
The covector \(γ_1\colon T_{(m_1,u_1)} E\to\LieG\) gives a covector \(γ_{x_1}\defeq γ_1\pi_{E}'(m_1,u_1)\colon T_{x_1}\prol{E}\to\LieG\), hence a \(\LieG\)-valued \(1\)-form \(γ\) on \(\prol{E}\).
The covector \(qγ_{x_1}\) vanishes on the fibers of \(\pi_{\qE}\colon x_1\in\prol{E}\to\qx_1\in\qE\defeq\prol{E}/G_+\), i.e. is semibasic, so determines a unique covector \(\qg_{x_1}\colon T_{\qx_1} \qE\to\LieG_0\), uniquely defined by \(\pi_{\qE}^*(\qg_{x_1})=qγ_{x_1}\pi_E'\), hence a connection covector.
Map
\[
\Phi \colon x_1=(m_1,u_1,γ_1)\in \prol{E} 
\mapsto 
(m_1,\qg_{x_1},\qx_1)\in\qE\times_M\Conn{\qE}
\]
which we quotient by \(G_0\)-action to get
\[
\begin{tikzcd}
\prol{E} \arrow{d} \arrow{r}{\Phi} & \qE\times_M\Conn{\qE} \arrow{d} \arrow{r}{\Delta} & \qE \arrow{d} \\
\prol{E}/G_0 \arrow{r}{\phi} & \Conn{\qE} \arrow{r}{\delta} & M
\end{tikzcd}
\]
applying the commutative diagram of~\vref{subsection:connection.bundle} but to \(\qE\) instead of \(E\).
\begin{lemma}
\(\Phi^*ω{}=qγ\).
\end{lemma}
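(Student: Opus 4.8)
The plan is to verify the claimed identity of $\LieG_0$-valued $1$-forms on $\prol{E}$ pointwise, by unwinding the definition of the canonical connection on a pullback bundle and then invoking the defining relation for $\qg$.

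Recall from section~\ref{subsection:connection.bundle}, now applied to the principal $G_0$-bundle $\qE\to M$ in place of $E\to M$, that the canonical holomorphic connection $\omega$ on $\qE\times_M\Conn{\qE}$ is given by $v\hook\omega=(\Delta'(x)v)\hook\nu$ for $v\in T_x(\qE\times_M\Conn{\qE})$, where a point $x=(m_0,\nu,\qe_0)$ has connection-covector slot $\nu$ and point slot $\qe_0$ (a point of $\qE$ over $m_0$), and $\Delta\colon\qE\times_M\Conn{\qE}\to\qE$ is the projection onto $\qe_0$. First I would observe that, from the definition $\Phi(x_1)=(m_1,\qg_{x_1},\qx_1)$, the connection-covector slot of $\Phi(x_1)$ is $\qg_{x_1}$ and the composite $\Delta\circ\Phi\colon\prol{E}\to\qE$ is exactly the quotient map $\pi_{\qE}\colon\prol{E}\to\qE=\prol{E}/G_+$.

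Granting this, for $x_1\in\prol{E}$ and $v\in T_{x_1}\prol{E}$ the chain rule gives
\[
(\Phi^*\omega)(v)=\bigl(\Delta'(\Phi(x_1))\,\Phi'(x_1)v\bigr)\hook\qg_{x_1}=\bigl(\pi_{\qE}'(x_1)v\bigr)\hook\qg_{x_1}=\qg_{x_1}\bigl(\pi_{\qE}'(x_1)v\bigr).
\]
By construction $\qg_{x_1}$ is the covector on $\qE$ characterized by $\qg_{x_1}\circ\pi_{\qE}'(x_1)=q\gamma_{x_1}$, so substituting yields $(\Phi^*\omega)(v)=q\gamma_{x_1}(v)=(q\gamma)(v)$; since $x_1$ and $v$ are arbitrary, $\Phi^*\omega=q\gamma$.

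I do not expect a genuine obstacle: the argument is essentially a chain-rule computation together with the definition of $\qg$. What needs care is the bookkeeping — keeping straight which of the three slots of a point of $\qE\times_M\Conn{\qE}$ is read off by $\Delta$ (so that $\Delta\circ\Phi=\pi_{\qE}$) and which slot supplies the connection covector in the tautological formula, and distinguishing the two maps $\pi_E\colon\prol{E}\to E$ and $\pi_{\qE}\colon\prol{E}\to\qE$ while recalling that $\gamma$ on $\prol{E}$ already absorbs the pullback of the connection covector $\gamma_1$ along $\pi_E$. Note in passing that only the pointwise relation defining each $\qg_{x_1}$ is invoked, so no $G_+$-invariance of $q\gamma$ enters the argument.
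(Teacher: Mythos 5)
Your proof is correct and takes essentially the same route as the paper's: unwind the tautological connection $\omega$ on $\qE\times_M\Conn{\qE}$, note that $\Delta\circ\Phi=\pi_{\qE}$, and conclude with the defining relation $\pi_{\qE}^*\qg_{x_1}=q\gamma_{x_1}$. Your bookkeeping of which slot of $\Phi(x_1)$ supplies the connection covector is, if anything, slightly more careful than the paper's own chain of equalities.
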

\begin{proof}
For \(m_1 \in M\), \(x_1=(m_1,u_1,γ_1)\in \prol{E}_m\), \(v_1\in T_{x_1}\prol{E}\), let
\[
y_1\defeq(m_1,\qg_{x_1},\qx_1)=\Phi(x_1)\in \qE\times_M\Conn{\qE}.
\]
so \(\Delta(y_1)=\Delta(m_1,\qg_{x_1},\qx_1)=\qx_1\).

Compute
\begin{align*}
v_1 \hook (\Phi^*ω{})_{x_1}
&=
\Phi'(x_1)v_1 \hook ω{\Phi(x_1)},
\\
&=
\Phi'(x_1)v_1 \hook ω{y_1},
\\
&=
(\Delta'(y_1)\Phi'(x_1)v_1) \hook qγ_1,
\\
&=
(\Delta\Phi)'(x_1)v_1\hook qγ_1,
\\
&=\pi_{\qE}'(x_1)v_1\hook qγ_1,
\\
&=v_1\hook\pi_{\qE}^*qγ_1,
\\
&=v_1\hook qγ_1\pi_E',
\\
&=v_1\hook qγ_{x_1}.
\end{align*}
\end{proof}
We take advantage of this and write \(ω{}\) to mean \(qγ\) henceforth.
Since \(ω{}\) is a holomorphic connection on \(\qE\times_M\Conn{\qE}\), its curvature is \(\Omega\defeq dω{}+\frac{1}{2}\lb{ω{}}{ω{}}\), and pulls back to a form we also denote \(\Omega\), \(\Omega\defeq q(dγ+\frac{1}{2}\lb{γ}{γ})\) on \(\prol{E}\), even though \(γ\) is not a connection on \(\prol{E}\).
The Bianchi identity \(d\Omega=\lb{\Omega}{ω{}}\) on \(\qE\times_M\Conn{\qE}\) ensures the same identity on \(\prol{E}\), even though \(γ\) is not a connection on \(\prol{E}\) and \(\Omega\) is not the curvature of a connection.

Similarly, for any \(G_0\)-invariant complex polynomial function \(f \colon \LieG_0 \to \C{}\), thought of as a symmetric multilinear form, the expression
\[
f_E\defeq f(\Omega,\dots,\Omega)
\]
on \(\prol{E}\) is the pullback of the Chern form \(f_{\qE\times_M\Conn{\qE}}\) for the connection \(ω{}\) on the bundle \(\qE\times_M\Conn{\qE} \to \Conn{\qE}\).
In particular, \(f_E\) is a closed holomorphic differential form.
From Cartan's structure equations
\[
\Omega=q(-ϖa\wedge σ+kσ^2).
\]
Similarly define 
\[
\CS{f}[E]\defeq \CS{f}(ω{},\lb{ω{}}{ω{}},ϖ{},σ),
\]
which is the pullback of the Chern--Simons form \(\CS{f}[\qE\times_M\Conn{\qE}]\), hence \(d\CS{f}[E]=f_E\).

\subsection{Smooth reduction of structure group}
Take a complex Lie group \(G\), finite dimensional holomorphic \(G\)-module \(V\), and a Langlands decomposition \(G\rtimes\prol\LieG=G_0\rtimes G_+\).
Take a holomorphic anchored \(G\)-structure \(G\to E\to M\) with prolongation \(\prol{E}\to E\to M\).

Since \(G\rtimes\prol\LieG/G_0\) is contractible, \(\prol{E}/G_0\to M\) admits a \smooth section \(s \colon M \to \prol{E}/G_0\) i.e. a \smooth \(G_0\)-reduction of structure group.
The \(1\)-form \(γ\) on \(\prol{E}\) pulls back to a \(1\)-form \(γ\) on \(s^*\prol{E}\).
Let \(\qE\defeq\prol{E}/G_+\), a holomorphic principal right \(G_0\)-bundle \(\bundle*{G_0}{\qE}{M}\).
So this \(1\)-form \(γ\) extends from \(s^*\prol{E}\) to a unique \(1\)-form on \(\qE\cong s^*\prol{E}\) which we also denote \(γ\), and which satisfies \(v \hook γ=v\) for \(v \in \LieG_0\).
\begin{lemma}
The \(1\)-form \(γ\) on \(\qE\) associated to any \smooth (or holomorphic) \(G_0\)-reduction is a \smooth (holomorphic) \((1,0)\)-connection \(1\)-form.
\end{lemma}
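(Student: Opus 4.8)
The plan is to pull the whole construction back to a single $G_0$-equivariant map into $\prol E$ and then read off the defining properties of a $(1,0)$-connection $1$-form from the properties of $\gamma$ on $\prol E$ already recorded in section~\ref{subsection:ConnBundlesShapes}. First I would reinterpret the reduction: a \smooth (resp.\ holomorphic) $G_0$-reduction $s\colon M\to\prol E/G_0$ is the same data as a \smooth (resp.\ holomorphic) $G_0$-equivariant section $\psi\colon\qE\to\prol E$ of the holomorphic principal $G_+$-bundle $\pi_{\qE}\colon\prol E\to\prol E/G_+=\qE$, since $s^*\prol E\hookrightarrow\prol E\to\qE$ is a $G_0$-equivariant morphism of principal $G_0$-bundles over $M$, hence an isomorphism, with inverse $\psi$. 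Under this identification the $1$-form $\gamma$ on $\qE$ attached to the reduction is $\psi^*(q\gamma)$, where $q\gamma$ is the $\LieG_0$-valued $(1,0)$-form on $\prol E$ written $\omega$ in section~\ref{subsection:ConnBundlesShapes}; it is valued in $\LieG_0$ because $q\gamma$ is, so it remains to check that it reproduces the fundamental vector fields, is $G_0$-equivariant, and is of type $(1,0)$.

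The two algebraic conditions are quick. For $v\in\LieG_0$, $G_0$-equivariance of $\psi$ carries $v_{\qE}$ to $v_{\prol E}$, and since $v\hook\gamma=v$ on $\prol E$ and $q$ fixes $\LieG_0$ pointwise, $v\hook\psi^*(q\gamma)=q(v)=v$. For $g_0\in G_0$, combining $\psi\circ r_{g_0}=r_{g_0}\circ\psi$, the transformation law $r_{g_0}^*\gamma=\Ad_{g_0}^{-1}\gamma$ on $\prol E$ (the case $Q=0$ of $r_{(g,Q)}^*\gamma=\Ad_g^{-1}\gamma-Qg^{-1}\sigma$), and $G_0$-equivariance of $q$,
\[
r_{g_0}^*\bigl(\psi^*(q\gamma)\bigr)=\psi^*\bigl(q\,r_{g_0}^*\gamma\bigr)=\psi^*\bigl(q\Ad_{g_0}^{-1}\gamma\bigr)=\Ad_{g_0}^{-1}\psi^*(q\gamma).
\]
By the description of the connection bundle in section~\ref{subsection:connection.bundle}, a $(1,0)$-form with these two properties is exactly a $(1,0)$-connection $1$-form, so only the type remains open.

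The type check is where \smooth behaves differently from holomorphic, and I expect it to be the main obstacle, because the pullback of a $(1,0)$-form by a merely \smooth map need not be of type $(1,0)$. The mechanism I would use is that $q\gamma$ is semibasic for $\pi_{\qE}$ --- it annihilates the tangents to the $G_+$-orbits, as recorded in section~\ref{subsection:ConnBundlesShapes} --- and that $\pi_{\qE}$ is a \emph{holomorphic} submersion. Given an antiholomorphic tangent vector $w$ on $\qE$, write $\psi'(w)=\xi'+\xi''$ with $\xi'$ of type $(1,0)$ and $\xi''$ of type $(0,1)$ on $\prol E$. Since $\pi_{\qE}'$ is complex linear it preserves type, and $\pi_{\qE}\circ\psi=\operatorname{id}$ then forces $\pi_{\qE}'(\xi')=0$, that is, $\xi'$ is tangent to a $G_+$-orbit. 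Hence $(q\gamma)(\psi'(w))=(q\gamma)(\xi')+(q\gamma)(\xi'')=0$: the first term vanishes by semibasicity, the second because $q\gamma$ is of type $(1,0)$. So $\psi^*(q\gamma)$ annihilates all antiholomorphic vectors, i.e.\ is of type $(1,0)$. In the holomorphic case $\psi'$ preserves type outright and this step is immediate.

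A shorter route I would also note avoids the axiom checking entirely: the reduction $s$ yields the section $\phi\circ s\colon M\to\Conn{\qE}$ of the connection bundle, with $\phi$ as in the diagram of section~\ref{subsection:ConnBundlesShapes}, which is a \smooth (resp.\ holomorphic) $(1,0)$-connection by the very definition of $\Conn{\qE}$; one then identifies its connection form with $\gamma$ on $\qE$ by checking that $\Phi\circ\psi=(\operatorname{id}_{\qE},(\phi\circ s)\circ\pi_M)$ and applying $\Phi^*\omega=q\gamma$ from the preceding lemma.
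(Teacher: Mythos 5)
Your argument is correct, and it is assembled from the same raw materials as the paper's own proof --- the identification \(\qE\cong s^*\prol{E}\) (equivalently your \(G_0\)-equivariant section \(\psi\) of \(\pi_{\qE}\)) and the transformation law \(r_{(g,Q)}^*\gamma=\Ad_g^{-1}\gamma-Qg^{-1}\sigma\) --- but it is substantially more complete. The paper's proof merely records that law and its specialization to \(G_+\), leaving the connection axioms, the regularity, and above all the type \((1,0)\) claim implicit; you verify the reproduction of fundamental vector fields and the \(G_0\)-equivariance explicitly (via the \((g_0,0)\) case of the law, which is the case the axioms actually require), and you supply the one step with genuine content when the reduction is only \smooth: \(\psi^*(q\gamma)\) annihilates \((0,1)\)-vectors because \(q\gamma\) is semibasic for the holomorphic submersion \(\pi_{\qE}\) while being of type \((1,0)\) on \(\prol{E}\), so the \((1,0)\)-part of \(\psi'(w)\) is vertical and gets killed. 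That is exactly why the lemma survives a non-holomorphic section \(s\), and the paper does not spell it out. Your shorter route, through the section \(\phi\circ s\) of \(\Conn{\qE}\) and the preceding lemma \(\Phi^*\omega=q\gamma\), is also valid and is the packaging closest to the machinery of the earlier subsection. One interpretive remark: the text before the lemma transfers \(\gamma\) itself, which is \(\LieG\)-valued, whereas you transfer \(q\gamma\); the projection is needed both to land in \(\LieG_0\) and for the \((1,0)\) statement when \(s\) is merely \smooth (since \(\gamma\) itself is not semibasic for \(\pi_{\qE}\)), and it is the projected form \(\gamma_0=\proj[\LieG_0]{\gamma}\) that the paper uses afterwards, so your reading is the right one rather than a gap in your argument.
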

\begin{proof}
Pick a point \((m_0,u_0,γ_0)\in s^*\prol{E}\), i.e. with \(m_0 \in M\) and \(u_0\in E_{m_0}\) and \((m_0,u_0)G_0=s(m_0)\).
So \(γ\) at the corresponding point of \(\qE\) is the \(1\)-form which pulls back by \(\prol{E}\to\qE\) to become \(γ\) at the point \((m_0,u_0,γ_0)\).
If we replace \((m_0,u_0,γ_0)\) by some point \((m_0,u_0,γ_0)(g,Q)\), for some \((g,Q)\in G\rtimes\prol\LieG\), 
\[
r_{(g,Q)}^*γ = \Ad_g^{-1}γ-Qg^{-1}σ.
\]
In particular, for \((g,Q)=(g_+,0)=g_+\in G_+\),
\[
r_{g_+}^*γ = \Ad_{g_+}^{-1}γ.
\]
\end{proof}

\subsection{Characteristic forms and classes}
The \emph{Atiyah form}, \(k^{\text{th}}\) \emph{Chern form}, \emph{Chern character form}, \emph{Todd form}, etc.\ of an anchored \(G\)-structure with a Langlands decomposition is the form identified with the infinitesimal Atiyah form, \(k^{\text{th}}\) Chern form, Chern character form, Todd form, etc.\ of for any \(G_0\)-invariant homogeneous polynomial function \(f \colon \LieH_0 \to \C\) when plugging in the forms \(γ\) and \(ϖa\wedge σ\):
\[
f(ϖ{},σ,\dots,ϖ{},σ)
\]
or, for a Chern--Simons form,
\[
\CS{f}\of{γ,γ,ϖ{},σ}.
\]
\begin{lemma}\label{lemma:main}
The Atiyah class, \(k^{\text{th}}\) Chern class, and so on, in Dolbeault cohomology of the bundle \(\qE \to M\) of a \(G\)-structure \(E \to M\) with Langlands decomposition is the class of the \((1,1)\)-part, \((k,k)\)-part, and so on, of the pullback by a \(C^{\infty}\) section \(s\) of the Atiyah form over each open set \(\prol{E}_a\).
The total Chern class, Chern character, Todd class, and so on, in Dolbeault cohomology of the bundle \(\qE \to M\) is the class of the pullback of the total Chern form, and so on.
\end{lemma}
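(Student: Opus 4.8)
The plan is to recognise these as an instance of Atiyah's holomorphic Chern--Weil theory \cite{Atiyah1957}, applied to the connection furnished by the smooth reduction of structure group.

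\emph{Producing the connection.} Fix a \(C^\infty\) section \(s\colon M\to\prol{E}/G_0\), available because the fibre \(\pr{G\rtimes\prol\LieG}/G_0\) of \(\prol{E}/G_0\to M\) is contractible; the resulting \(G_0\)-reduction is identified, by the smooth-reduction lemma, with the holomorphic principal \(G_0\)-bundle \(\qE\to M\), equipped with the \(C^\infty\) \((1,0)\)-connection \(1\)-form obtained by descending \(q\gamma\), whose curvature \(\Theta\) has vanishing \((0,2)\)-component. I would then invoke the classical fact \cite{Atiyah1957}: for a holomorphic principal \(G_0\)-bundle with a \(C^\infty\) \((1,0)\)-connection of curvature \(\Theta\), the \((1,1)\)-component \(\Theta^{1,1}\) is \(\bar\partial\)-closed and represents the Atiyah class in \(\cohomology{1}{M,\Omega^1_M\otimes\ad_{\qE}}\); and for any \(\LieG_0\)-invariant polynomial \(f\) of degree \(k\), the \((k,k)\)-component \(f(\Theta^{1,1},\dots,\Theta^{1,1})\) of the Chern--Weil form \(f(\Theta,\dots,\Theta)\) is \(\bar\partial\)-closed and represents the corresponding class in \(\cohomology{k}{M,\Omega^k_M}=\cohomology[db]{k,k}{M}\), independently of the chosen \(C^\infty\) \((1,0)\)-connection. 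Moreover, by subsection~\ref{subsection:ConnBundlesShapes}, the closed form \(f_E=f(\Omega,\dots,\Omega)\) is the pullback of the Chern form of \(\omega=q\gamma\) on \(\qE\times_M\Conn{\qE}\to\Conn{\qE}\), and the section of \(\Conn{\qE}\) that \(s\) determines pulls this back to \(\qE\) with exactly the connection \(q\gamma\); so \(f_E\), pulled back by \(s\), is the Chern--Weil form \(f(\Theta,\dots,\Theta)\).

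\emph{Disposing of the torsion correction.} Since the soldering form \(\sigma\) is semibasic for the holomorphic submersion \(\qE\to M\) and is built from the complex-linear frame, its pullback to \(\qE\) is a \((1,0)\)-form; hence, over each \(\prol{E}_a\), the torsion term \(q(k_a\sigma^2)\) pulls back to \(\qE\) as a form of pure Hodge type \((2,0)\). By Cartan's structure equations the Atiyah form of the \(G\)-structure is \(a(\varpi,\sigma)=\rho_W\of{\Omega-q(k_a\sigma^2)}\) over \(\prol{E}_a\), so its pullback has the \emph{same} \((1,1)\)-component as that of \(\rho_W(\Omega)\); expanding \(f\) multilinearly, and using that \(\Omega\) has no \((0,2)\)-component after pullback, the \((k,k)\)-component of the characteristic form \(f\of{a(\varpi,\sigma),\dots,a(\varpi,\sigma)}\) is assembled solely from the \((1,1)\)-parts of its entries, so it agrees with the \((k,k)\)-component of \(f_E\). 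With the previous paragraph this shows that the \((1,1)\)-part of the pulled-back Atiyah form represents the Atiyah class of \(\qE\to M\), and that the \((k,k)\)-part of the pulled-back \(k^{\text{th}}\) Chern form represents \(c_k\of{\qE}\).

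\emph{The main obstacle, and the conclusion.} The delicate part is precisely this bidegree bookkeeping: one must check carefully that \(\sigma\) really pulls back to a \((1,0)\)-form along the merely \(C^\infty\) section \(s\), and that the connection-bundle construction of subsection~\ref{subsection:ConnBundlesShapes} genuinely recovers \(q\gamma\) as a \(C^\infty\) \((1,0)\)-connection on \(\qE\), not a form of mixed Hodge type. Once this is in place, the torsion term stays in bidegree \((2,0)\) and never reaches the diagonal, while the failure of \(q\colon\LieG\to\LieG_0\) to be a Lie algebra morphism is absorbed into the connection-bundle formalism. The last sentence of the lemma then follows formally: by linearity and the splitting principle of subsection~\ref{subsec:inf.char}, the \((k,k)\)-parts of the pullback of the total Chern form are the classes \(c_k\of{\qE}\), and the same reasoning, applied to the relevant power series, gives the Chern character and the Todd class.
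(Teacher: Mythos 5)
Your proposal is correct and follows essentially the same route as the paper: use the contractibility of \(\pr{G\rtimes\prol\LieG}/G_0\) to get a \(C^\infty\) \(G_0\)-reduction, so that \(q\gamma\) descends to a \(C^\infty\) \((1,0)\)-connection on \(\qE\), invoke Atiyah's representation of the Atiyah and Chern classes by the \((1,1)\)- and \((k,k)\)-parts of its curvature, and observe via Cartan's structure equations that the curvature/torsion terms \(k_a\sigma^2\) (and the overlap discrepancies \(p_{ab}\sigma^2\)) are of type \((2,0)\), hence do not affect those components. The only cosmetic difference is that you route the comparison through the globally defined pullback \(\Omega\) from the connection bundle rather than working locally with each \(\varpi_a\) as the paper does, which handles the independence of the choice of pseudoconnection automatically.
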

\begin{proof}
Denote by \(w \in\LieG\mapsto \proj[\LieG_0]{w}\in\LieG_0\) some complex linear \(G_0\)-invariant projection, letting \(γ_0\defeq\proj[\LieG_0]{γ}\).
The Atiyah class \cite{Atiyah1957} of \(\qE\) is represented by
\[
a(M,\qE)=[\bar\partial γ_0]=\left[(dγ_0)^{1,1}\right].
\]
Pick local \smooth prolongation pseudoconnection \(1\)-forms \(ϖa\).
The \(2\)-form \((dγ_0)^{1,1}\) pulls back to \(s^*\prol{E}_a\) to
\[
-\proj[\LieG_0]{ϖa^{0,1}\wedge σ},
\]
noting that the curvature terms \(kσ^2\) are \((2,0)\)-forms, so make no contribution to this \((1,1)\)-form.
Also note that when we change from \(ϖa\) to \(ϖb\), the difference \(p_{ab}σ^2\) is also a \((2,0)\)-form, so makes no contribution to this \((1,1)\)-form.
The Atiyah class is represented by
\[
(dγ_0)^{1,1} = -\proj[\LieG_0]{ϖa^{0,1} \wedge σ} 
\in
\nForms[W]{1,1}{M}
\]
where \(W\subset\LieG_0\) is the projection to \(\LieG_0\) of the span of \(\LieG_+ V\)
\end{proof}
\begin{example}
If the infinitesimal first Chern form of a \(G\)-module vanishes, then every complex manifold with a \(G\)-structure modelled on that \(G\)-module has a holomorphic connection on its canonical bundle.
This happens, for instance, for holomorphic symplectic structures, as the symplectic group preserves the Liouville volume form.
\end{example}
\begin{example}
The Cartan geometries that arose in the two previous papers in this series \cite{McKay:2011,mckay2022} could also be described in the language of \(G\)-structures, recovering our previous theorems for them.
\end{example}
For any \(\LieG_0\)-module \(W\), even if \(W\) is \emph{not} a \(G_0\)-module, we still write \(a(M,\vb{W})\) to mean the class in Dolbeault cohomology associated to \(\rho_W \circ a\), even though \(\vb{W}\) does not exist.
We say that \(\vb{W}\) is a \emph{ghost vector bundle}.
\begin{corollary}
For a complex manifold admitting a \(G\)-structure, the Atiyah class of the tangent bundle is the Dolbeault class of the \((1,1)\)-part of the pullback by any \(C^{\infty}\) section of the form identified by a local Cartan connection with the tangent bundle Atiyah form.
\end{corollary}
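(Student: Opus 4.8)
The plan is to construct, from a local Cartan connection together with the $C^{\infty}$ $G_0$-reduction built in the preceding subsections, an explicit $C^{\infty}$ $(1,0)$-connection on the holomorphic tangent bundle, to recognize its Atiyah form as the pullback of the tangent bundle Atiyah form $a_T$ in the Cartan connection forms, and then to appeal to the connection-independent description of the Atiyah class used in the proof of Lemma~\ref{lemma:main}.

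First I would fix the identification of the tangent bundle. For a $G$-structure the soldering form $\sigma$ gives the tautological isomorphism $TM\cong\amal{E}{G}{V}$, and pulled back to the prolongation — where $\sigma$ is a holomorphic, semibasic, $(1,0)$-form — this becomes $TM\cong\amal{\prol{E}}{G\rtimes\prol\LieG}{V}$, with $\prol\LieG$ acting trivially on $V$ and $G_+$ acting through the Langlands filtration. It is important to work with $TM$ and not with the associated bundle $\vb{V}$ of Lemma~\ref{lemma:main}: the two differ by a $C^{\infty}$-trivial but not holomorphically trivial change of structure, and this is exactly why it is the symmetric form $a_T$, rather than the module Atiyah form $\rho_V\circ a$, that represents $a(TM)$.

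Next I would write down the connection and compute. Pulling $\gamma$ and local prolongation pseudoconnection forms $\varpi_a$ back to $\qE\cong s^{*}\prol{E}$, the form $\rho_V(\gamma)$ — corrected by the $\sigma$-valued contorsion term forced by the structure equation $d\sigma+\gamma\wedge\sigma=t\sigma^{2}$ — is a $C^{\infty}$ $(1,0)$-connection on $TM$ whose $(0,1)$-part is the Dolbeault operator of $TM$; checking this uses only that $\sigma$ is semibasic and that $\gamma$ is a connection covector. From Cartan's two structure equations I would compute the curvature. Because $\sigma$ is of type $(1,0)$, every term containing $\sigma^{2}$ — the torsions $t\sigma^{2}$ and $k_a\sigma^{2}$, the overlap terms $p_{ab}\sigma^{2}$, and the contorsion corrections — is of type $(2,0)$ and drops out of the $(1,1)$-part; what remains, after applying $s^{*}$, is exactly the $(1,1)$-part of the form obtained by substituting $\varpi_a$ and $\sigma$ into the tangent bundle Atiyah form $a_T$. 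The symmetrization $\tfrac12(a(x,y)z+a(x,z)y)$ in the definition of $a_T$ is forced at this point: it is the symmetrization over the two $V$-slots carried by the contorsion of a torsion-adjusted first-order connection, reflecting that $\rho$ maps $\prol\LieG$ into $V\otimes\Sym{2}{V}^{*}$ — just as it shaped $\Omega=q(-\varpi_a\wedge\sigma+k\sigma^{2})$ earlier.

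Finally I would conclude as in Lemma~\ref{lemma:main}: the Atiyah class of any holomorphic vector bundle is the Dolbeault class of $\bar\partial$ applied to the $(1,0)$-part of the connection form of any $C^{\infty}$ connection whose $(0,1)$-part is the bundle's Dolbeault operator, and this class does not depend on the connection; applied to the connection just built on $TM$, this gives the corollary. (Its ``diagonal'' part recovers Lemma~\ref{lemma:main} for the $\LieG_0$-module $V$, while the strictly filtration-lowering part of $a_T-\rho_V\circ a$ contributes the extension-class part of $a(TM)$.) The hard part will be the third step: pinning down that the $(1,1)$-part of the curvature of the first-order connection on $TM$ is exactly the symmetrized tangent bundle Atiyah form in the Cartan connection forms, and not merely $\rho_V$ applied to the curvature of the $G_0$-bundle $\qE$ — and, more basically, keeping straight that it is $a(TM)$ and not $a(\vb{V})$ that is wanted, given the $G_+$-twisted equivariance of $\sigma$ and the fact that $\gamma$ is only a pseudoconnection.
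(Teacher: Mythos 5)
The paper gives this corollary no separate proof: it is Lemma~\ref{lemma:main} rerun with the module \(V\), using the holomorphic identification \(TM\cong\vb{V}=\amal{E}{G}{V}\) supplied by the soldering form, and your proposal is essentially that same argument (smooth \(G_0\)-reduction, \(\rho_V(\gamma)\) as a \((1,0)\)-connection compatible with \(\bar\partial_{TM}\), Cartan's structure equations, all \(\sigma\wedge\sigma\) terms being of type \((2,0)\) and hence invisible in the \((1,1)\)-part, connection-independence of the Dolbeault Atiyah class). Two of your supporting claims need repair, though neither derails the computation. First, \(\vb{V}\) is holomorphically, not merely smoothly, isomorphic to \(TM\): that is exactly what the \(G\)-equivariant map \(E\to FM\) in the definition of a \(G\)-structure provides; the genuine smooth-versus-holomorphic discrepancy arises only for the bundle associated to the \(G_0\)-bundle \(\qE\), namely the associated graded of the \(G_+\)-filtration of \(TM\), so your distinction is right in spirit but misattributed. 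Second, no contorsion correction of \(\rho_V(\gamma)\) is needed --- since the torsion terms \(t\sigma^2\), \(k_a\sigma^2\), \(p_{ab}\sigma^2\) are \((2,0)\), the uncorrected \(\rho_V(\gamma)\) already has \((0,1)\)-part equal to the Dolbeault operator in the frames cut out by \(\sigma\) --- and the symmetrization in \(a_T\) is not ``forced by the contorsion'': for \(x\in\prol\LieG\) the bilinear map \((y,z)\mapsto\rho_V(x(y))z\) is already symmetric, because \(\delta x=0\) means precisely that \(\prol\LieG\) lands in \(V\otimes\Sym{2}{V}^*\); this is the reason you cite in passing and the one to keep (the paper's own remark instead invokes the known symmetry of \(a(TM)\)). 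With these repairs your ``hard'' third step largely evaporates: the \((1,1)\)-part of the curvature of \(\rho_V(\gamma)\), pulled back by the section, is \(-\rho_V\bigl(\varpi_a^{0,1}\wedge\sigma\bigr)\) on the nose, which the prolongation symmetry identifies with the tangent bundle Atiyah form evaluated on \((\varpi_a^{0,1},\sigma)\), up to the projection \(\proj[\LieG_0]{\cdot}\) built into the paper's definition of \(a\) --- the same filtration-lowering ambiguity you flag in your final parenthesis, and one the paper itself leaves implicit.
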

The symmetry of the tangent bundle Atiyah class is a consequence of the well known symmetry of the Atiyah class of the tangent bundle.

\section{Example: Engel plane fields}
An Engel plane field is a holomorphic rank \(2\) subbundle \(\vb{W} \subset TM\) of the tangent bundle of a complex 4-fold \(M\) so that, near each point, there are local holomorphic sections \(u,v\) of \(\vb{W}\) so that \(u,v,\lb{u}{v},\lb{u}{\lb{u}{v}}\) are linearly independent tangent vector fields.
For more information, see \cite{BCGGG:1991} p. 50 Theorem II.5.1, \cite{Bryant/Hsu:1993}, \cite{Presas/Sola.Conde:2014}.
It is easy (essentially following the proof of \cite{BCGGG:1991} Theorem II.5.1) to see that the method of equivalence yields structure equations
\[
d
\begin{pmatrix}
σ^1 \\
σ^2 \\
σ^3 \\
σ^4
\end{pmatrix}
=
-
\begin{pmatrix}
2 γ_3^3+γ_4^4 & 0 & 0 & 0 \\
γ_1^2 & γ_3^3+γ_4^4 & 0 & 0 \\ 
γ_1^3 & γ_2^3 & γ_3^3 & 0 \\
γ_1^4 & γ_2^4 & γ^4_3 & γ^4_4 
\end{pmatrix}
\wedge
\begin{pmatrix}
σ^1 \\
σ^2 \\
σ^3 \\
σ^4
\end{pmatrix}
-
\begin{pmatrix}
σ^3 \wedge σ^2 \\
σ^3 \wedge σ^4 \\
0 \\
0 
\end{pmatrix}.
\]
Our first step: consider just the Lie algebra.
The Lie algebra \(\LieG\) of the structure group \(G\) of a \(G\)-structure is the set of values of the matrix \(\left(γ^i_j\right)\) as we vary its entries.
As \(\LieG\) consists of upper triangular matrices, every Engel plane field on any complex manifold \(M\) determines a filtration of holomorphic vector subbundles
\[
0=\vb{W}_0\subset\vb{W}_1\subset\vb{W}_2\subset\vb{W}_3\subset\vb{W}_4 = TM
\]
forming a complete flag.
Let
\begin{align*}
a &= dγ^3_3, \\
b &= dγ^4_4;
\end{align*}
differential forms which descend to the Chern classes in Dolbeault cohomology of the quotient line bundles \(\vb{W}_3/\vb{W}_2\) and \(\vb{W}_4/\vb{W}_3\).
The Atiyah class of the tangent bundle lies in the same Lie algebra \(\LieG\).
The associated graded of the filtration is represented by the \(\LieG\)-module:
\[
\begin{pmatrix}
2γ^3_3+γ^4_4 & 0 & 0 & 0 \\
0 & γ^3_3+γ^4_4 & 0 & 0 \\ 
0 & 0 & γ^3_3 & 0 \\
0 & 0 & 0 & γ^4_4 
\end{pmatrix}
\]
which is also the Lie algebra \(\LieG_0\) of the obvious Langlands decomposition, i.e. the maximal reductive linear algebraic subgroup.
Taking characteristic polynomial of this matrix, the Chern classes of tangent bundle (or equivalently, of the associated graded of the tangent bundle), in Dolbeault cohomology, are
\begin{align*}
c_1&=\frac{i}{2 \, \pi}(4 \, a + 3 \, b), \\
c_2&=\left( \frac{i}{2 \, \pi}\right)^2(5 \, a^{2} + 9 \, a b + 3 \, b^{2}), \\
c_3&=\left( \frac{i}{2 \, \pi}\right)^3(2 \, a^{3} + 8 \, a^{2} b + 6 \, a b^{2} + b^{3}), \\
c_4&=\left( \frac{i}{2 \, \pi}\right)^4({\left(2 \, a + b\right)} {\left(a + b\right)} a b),
\end{align*}
The reader can check that
\[
0 = 
c_1^4 
-\frac{11}{2}c_1^2c_2
+4c_2^2
+\frac{21}{2}c_1 c_3
-
\frac{75}{2}c_4.
\]
Hence any complex \(4\)-manifold \(M\) which admits an Engel plane field satisfies this equation in the Chern classes of its tangent bundle \(TM\), in Dolbeault cohomology.

Our second step: compute the Lie algebra prolongation \(\prol\LieG\).
We do this without computing the prolongation of these structure equations, so we only arrive at structure equations modulo torsion terms.
Let \(\nabla γ=-ϖ{}\wedge σ\) be the Atiyah form:
\[
\nabla γ=
\begin{pmatrix}
2 \nabla γ^3_3+\nabla γ^4_4 & 0 & 0 & 0 \\
\nabla γ^2_1 & \nabla γ^3_3+\nabla γ^4_4 & 0 & 0 \\ 
\nabla γ^3_1 & \nabla γ^3_2 & \nabla γ^3_3 & 0 \\
\nabla γ^4_1 & \nabla γ^4_2 & \nabla γ^4_3 & \nabla γ^4_4 
\end{pmatrix}.
\]
Compute \(\nabla γ\) components, i.e. compute \(\prol\LieG\):
\[
\nabla
\begin{pmatrix}
γ^2_1\\
γ^3_1\\
γ^3_2\\
γ^3_3\\
γ^4_1\\
γ^4_2\\
γ^4_3\\
γ^4_4
\end{pmatrix}
=
\begin{pmatrix}
ϖ{11}^2 & ϖ{13}^3+ϖ{14}^4 & 0 & 0 \\
ϖ{11}^3 & ϖ{12}^3 & ϖ{13}^3 & 0 \\
ϖ{12}^3 & ϖ{22}^3 & ϖ{23}^3 & 0 \\
ϖ{13}^3 & ϖ{23}^3 & 0 & 0\\
ϖ{11}^4 & ϖ{12}^4 & ϖ{13}^4 & ϖ{14}^4 \\
ϖ{12}^4 & ϖ{22}^4 & ϖ{23}^4 & -2ϖ{23}^3 \\
ϖ{13}^4 & ϖ{23}^4 & ϖ{33}^4 & 0 \\
ϖ{14}^4 & -2ϖ{23}^3 & 0 & 0
\end{pmatrix}
\wedge
\begin{pmatrix}
σ^1\\
σ^2\\
σ^3\\
σ^4
\end{pmatrix}.
\]
Look at the last two diagonal entries to see that in the Atiyah class expression
\begin{align*}
a=dγ^3_3 &= -ϖ{13}^3 \wedge σ^1 - ϖ{23}^3 \wedge σ^2, \\
b=dγ^4_4 &= -ϖ{14}^4 \wedge σ^1 + 2ϖ{23}^3 \wedge σ^2
\end{align*}
(computing modulo torsion and curvature, as they do not affect the Atiyah class).
Each only involves \(σ^1,σ^2\), while \(2a+b\) only involves \(σ^1\).
Therefore \(0=(2a+b)^2\) and any polynomial of degree \(3\) or more in \(a,b\) vanishes.
The associated Chern--Simons form
\begin{align*}
\CS{(2a+b)^2}
&=(2γ^3_3+γ^4_4)\wedge(2\,dγ^3_3+dγ^4_4),\\
&=-(2γ^3_3+γ^4_4)\wedge(2ϖ{13}^3+ϖ{14}^4)\wedge σ^1
\end{align*}
is closed, precisely because of the vanishing of \((2a+b)^2\).
Note also that \(\CS{a}\wedge\CS{b}\) is closed, even though perhaps neither \(\CS{a}\) nor \(\CS{b}\) are.
\begin{theorem}
The tangent bundle \(TM\) of any \(4\)-dimensional complex manifold \(M\) with a holomorphic Engel plane field satisfies  \(0=a(M,T)^3=c_1^3=c_1c_2=c_3=c_2^2=c_4\) in Dolbeault cohomology.
The induced subbundles \(0=\vb{W}_0\subset\vb{W}_1\subset\vb{W}_2\subset\vb{W}_3\subset TM\) have \(c_1(\vb{W}_1)^2=0\).
\end{theorem}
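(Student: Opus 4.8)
The plan is to read the statement off the two computations already carried out above: the identification of $\LieG_0$ with the diagonal matrices $\operatorname{diag}(2\gamma^3_3+\gamma^4_4,\gamma^3_3+\gamma^4_4,\gamma^3_3,\gamma^4_4)$, and the prolongation $\prol\LieG$. First I would fix a $C^\infty$ $G_0$-reduction $s$ of $\prol{E}$, as in Lemma~\ref{lemma:main}, and pull back the forms $\varpi,\sigma$. After this the Atiyah class of $TM$ is represented by the $(1,1)$-part of the Atiyah form $-\varpi\wedge\sigma$; this is a matrix of $(1,1)$-forms, triangular with respect to the holomorphic flag $\vb{W}_\bullet$, whose diagonal entries are the four weight forms $2\tilde a+\tilde b$, $\tilde a+\tilde b$, $\tilde a$, $\tilde b$, where $\tilde a,\tilde b$ are the $(1,1)$-parts of the pullbacks by $s$ of $a=d\gamma^3_3$ and $b=d\gamma^4_4$ (the curvature and torsion terms of Cartan's equations being $(2,0)$, they never contribute, exactly as in the proof of Lemma~\ref{lemma:main}). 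The one structural input is the prolongation computation above: it shows $\tilde a$ and $\tilde b$ lie in the exterior ideal generated by the two $(1,0)$-forms $s^*\sigma^1,s^*\sigma^2$, while $2\tilde a+\tilde b$, coming from $2a+b=-(2\varpi^3_{13}+\varpi^4_{14})\wedge\sigma^1$, lies in the ideal generated by $s^*\sigma^1$ alone. Two purely formal consequences, used throughout: every wedge product of three of the forms $\tilde a,\tilde b$ vanishes (three one-forms drawn from a set of two must repeat), and $(2\tilde a+\tilde b)^2=0$.

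For the Chern classes I would invoke the splitting principle of subsection~\ref{subsec:inf.char}: $c(TM)=\prod_{i=1}^{4}\bigl(1+c_1(\vb{W}_i/\vb{W}_{i-1})\bigr)$, the four graded line bundles having first Chern classes represented by $2\tilde a+\tilde b$, $\tilde a+\tilde b$, $\tilde a$, $\tilde b$; in particular $c_1=4\tilde a+3\tilde b$ and each $c_k$ is homogeneous of degree $k$ in $\tilde a,\tilde b$. Then $c_1^3$, $c_1c_2$, $c_3$ are of degree three and $c_2^2$, $c_4$ of degree four, all at least three, so they are represented by vanishing forms; thus $0=c_1^3=c_1c_2=c_3=c_2^2=c_4$ in Dolbeault cohomology. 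The bottom line $\vb{W}_1$ of the flag is the one whose first Chern class is represented by $2\tilde a+\tilde b$ (the first diagonal entry, consistent with the identifications $a=c_1(\vb{W}_3/\vb{W}_2)$, $b=c_1(\vb{W}_4/\vb{W}_3)$), so $c_1(\vb{W}_1)^2=(2\tilde a+\tilde b)^2=0$.

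It remains to see $a(M,T)^3=0$ as a class valued in $\operatorname{End}TM$. The matrix cube of the Atiyah form is again triangular with respect to $\vb{W}_\bullet$; its diagonal entries are $(2\tilde a+\tilde b)^3$, $(\tilde a+\tilde b)^3$, $\tilde a^3$, $\tilde b^3$, each zero by the formal observation of the first paragraph. For the strictly-triangular entries I would expand $(-\varpi\wedge\sigma)^3$ using the explicit entries of $\prol\LieG$ tabulated above; in each surviving triple product one of $s^*\sigma^1,s^*\sigma^2$ recurs, or the contributions carrying $\varpi^3_{23}$ cancel pairwise (using the symmetry and the relations among the $\varpi^i_{jk}$ recorded in that table), leaving zero. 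Hence $a(M,T)^3=0$ in Dolbeault cohomology.

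The step I expect to be the real obstacle is precisely that last off-diagonal cancellation. The Chern-class relations and $c_1(\vb{W}_1)^2=0$ are formal once one knows the diagonal weight forms live in a two-generated ideal of $(1,0)$-forms; but the endomorphism-valued identity $a(M,T)^3=0$ is genuinely sensitive to the (in general non-split) holomorphic filtration of $TM$, and it survives only because the prolongation $\prol\LieG$ of the Engel Lie algebra is as small as the table shows. For a general $G$-structure the off-diagonal part of the cubed Atiyah form would not vanish, and no such relation would hold.
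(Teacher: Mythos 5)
Your first two paragraphs reproduce the paper's argument: the Chern classes are polynomials in the two weight forms $a,b$, which lie in the ideal generated by $\sigma^1,\sigma^2$, while $2a+b$ lies in the ideal of $\sigma^1$ alone, so every polynomial of degree at least three in $a,b$ and the square of $2a+b$ vanish; this is exactly how the paper gets $c_1^3=c_1c_2=c_3=c_2^2=c_4=0$ and $c_1(\vb{W}_1)^2=0$. The gap is precisely where you predicted it: the off-diagonal analysis behind $a(M,T)^3=0$. If you represent $a(M,T)$ by the full lower-triangular $\LieG$-valued form $-\varpi\wedge\sigma$, the strictly triangular entries of its matrix cube do \emph{not} all die by your two mechanisms. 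Writing $u=\varpi^3_{13}$, $v=\varpi^3_{23}$, $w=\varpi^4_{14}$, so that up to sign $a=u\wedge\sigma^1+v\wedge\sigma^2$ and $b=w\wedge\sigma^1-2v\wedge\sigma^2$, one computes $a^2+a\wedge b+b^2=-3\,v\wedge w\wedge\sigma^1\wedge\sigma^2$, so the $(4,3)$ entry of the cube is
\[
\nabla\gamma^4_3\wedge\bigl(a^2+a\wedge b+b^2\bigr)
=-3\,\varpi^4_{33}\wedge\varpi^3_{23}\wedge\varpi^4_{14}\wedge\sigma^3\wedge\sigma^1\wedge\sigma^2
\]
modulo torsion and curvature: three distinct $\varpi$'s and three distinct $\sigma$'s, with no repeated factor and nothing to cancel against. (The $(2,1)$ and $(3,2)$ entries do vanish, by exactly the mechanisms you describe, which is why the pattern looks plausible; but $(4,3)$ survives.) So at the level of representative forms the cube of the filtered, $\operatorname{End}(TM)$-valued object is not zero, and killing its Dolbeault class would need an argument you have not supplied.

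The paper takes a different route that sidesteps this entirely. In its conventions (subsection~\ref{subsec:inf.char}, Lemma~\ref{lemma:main} and the corollary following it, together with the ghost-bundle convention), $a(M,T)$ is the class of the tangent bundle Atiyah form, which is built from the projection of $-\varpi\wedge\sigma$ to $\LieG_0$ only; for the Engel structure that projection is the diagonal $\operatorname{diag}(2a+b,\,a+b,\,a,\,b)$. Its cube is diagonal with entries cubic in $a,b$, so $a(M,T)^3=0$ follows from the very same degree count that gives the Chern-class relations, with no off-diagonal terms ever appearing. To repair your write-up, replace the third paragraph by an appeal to that corollary; if instead you insist on the stronger statement for the genuinely filtered endomorphism-valued Atiyah class of $TM$, be aware that it does not follow from the algebra of $\prol\LieG$ alone, as the $(4,3)$ computation above shows.
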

Summing up, the naive calculation using only the identification of the Lie algebra \(\LieG\) gave only one equation, of fourth order, in the Chern classes, while identification of the prolongation \(\prol\LieG\) gives \(5\) equations, \(3\) of third order and \(2\) of fourth order.
\begin{example}
The compact non-K\"ahler \(4\)-fold \(M=\SU{3}\) has \(c_1^3 \ne 0\) in Dolbeault cohomology \cite{Griffiths:1962}, so bears no holomorphic Engel plane field.
\end{example}
\begin{example}
A compact complex manifold with a holomorphic Engel plane field is not of general type.
\end{example}
But we know more, at least in some rough intuitive heuristic form: the Atiyah class of the frame bundle of a complex \(4\)-manifold is computed as a differential form which, in coordinates, has \(64\) components.
But the Atiyah class of an \(G\)-bundle is expressed as a differential form with \(32\) components.
Finally, taking into account the prolongation, there are only \(14\) components.

\section{Example: Baum--Bott}
Suppose that \(\vb{W} \subset TM\) is a holomorphic rank \(p\) subbundle of the tangent bundle of a complex manifold \(M\).
Denote the complex dimension of \(M\) by \(p+q\).
Let \(G \subset \GL{p+q}\) be the subgroup preserving \(W\defeq\C[p] \oplus 0 \subset \C[p+q]\), \(G_0\) the subgroup preserving a complement.
Let \(G \to E \to M\) be the \(G\)-structure consisting of the pairs \((m,u)\) where \(m \in M\) and \(u \colon T_m M \to \C[p+q]\) is a complex linear isomorphism for which \(u(\vb{W}_m) = W\).


If we let
\[
\nabla
\begin{pmatrix}
σ^i\\
σ^I
\end{pmatrix}
\defeq
d
\begin{pmatrix}
σ^i\\
σ^I
\end{pmatrix}
+
\begin{pmatrix}
γ^i_j&γ^i_J\\
0&γ^I_J
\end{pmatrix}
\wedge
\begin{pmatrix}
σ^i\\
σ^I
\end{pmatrix},
\]
then, after absorption of torsion, 
\[
\nabla
\begin{pmatrix}
σ^i\\
σ^I
\end{pmatrix}
=
\frac{1}{2}
\begin{pmatrix}
0\\
t^I_{jk}σ^j\wedge σ^k
\end{pmatrix},
\]
so the torsion is \(t^I_{jk}\).
We leave the reader to check that the torsion is anchored if and only if it vanishes, which occurs if and only if the subbundle \(\vb{W}\subset TM\) is bracket closed, i.e. a holomorphic foliation, and the only possible anchor is \(t=0\).
We then find that, if we set
\begin{align*}
\nabla
\begin{pmatrix}
γ^i_j&γ^i_J\\
0&γ^I_J
\end{pmatrix}
&\defeq
d
\begin{pmatrix}
γ^i_j&γ^i_J\\
0&γ^I_J
\end{pmatrix}
+
\begin{pmatrix}
γ^i_k&γ^i_K\\
0&γ^I_K
\end{pmatrix}
\wedge
\begin{pmatrix}
γ^k_j&γ^k_J\\
0&γ^K_J
\end{pmatrix}
\\
&\qquad
+
\begin{pmatrix}
ϖ{jk}^i\wedge σ^k+ϖ{jK}^i\wedge σ^k&
ϖ{Jk}^i\wedge σ^k+ϖ{JK}^i\wedge σ^k\\
0&
ϖ{Jk}^I\wedge σ^k+ϖ{JK}^I\wedge σ^k\\
\end{pmatrix}
\end{align*}
then
\[
\nabla
\begin{pmatrix}
γ^i_j&γ^i_J\\
0&γ^I_J
\end{pmatrix}
=0
\]
for any local choice of prolongation pseudoconnection, which is not surprising, as all holomorphic foliations are locally isomorphic.
The \(1\)-forms \(γ^I_J\) are the holomorphic connection on the normal bundle of each leaf.

Take any \(\GL{q}\)-invariant polynomial \(P\) of degree \(\ge q+1\), perhaps valued in a finite dimensional holomorphic \(\GL{q}\)-module.
Write, as above,
\[
\nabla γ^I_J = dγ^I_J + γ^I_K \wedge γ^K_J.
\]
We find
\[
P(\nabla γ^I_J)=P(-γ^I_{JK} \wedge γ^K),
\]
expands out to have more than \(q\) \(1\)-forms \(\omega^K\) in each term.
But there are only \(q\) such \(1\)-forms, so 
\[
P(\nabla γ^I_J)=0
\]
modulo torsion.
The Chern--Simons form is then
\[
\CS{P}=P(γ^I_J,\nabla γ^I_J,\dots,\nabla γ^I_J)
\]
which also vanishes, modulo torsion, if there are more than \(q\) \(1\)-forms \(\omega^K\) in each term, i.e. if \(P\) has degree \(q+2\) or more.
We recover the Baum--Bott theorem \cite{Baum/Bott:1972} p. 287 for holomorphic folations, with results of Kamber and Tondeur \cite{Kamber/Tondeur:1975}:
\begin{theorem}
All Chern classes, in Dolbeault cohomology, of the normal bundle of any holomorphic foliation, of degree more than the codimension of the foliation, vanish.
All of their associated Chern--Simons classes, in Dolbeault cohomology, 
of degree at least two more than the codimension of the foliation, vanish.
\end{theorem}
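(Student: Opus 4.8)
The plan is to push the \(G\)-structure \(G\to E\to M\) just constructed through Lemma~\ref{lemma:main}. Since \(\vb{W}\subset TM\) is a holomorphic foliation, its intrinsic torsion vanishes (as observed above), so \(t=0\) is the unique anchor, the \(G\)-structure prolongs, and I fix the prolongation \(\prol{E}\to E\to M\), the Langlands decomposition with \(G_0=\GL{p}\times\GL{q}\), and \(\qE\defeq\prol{E}/G_+\). The normal bundle \(N\defeq TM/\vb{W}\) is the bundle associated to the genuine \(G_0\)-module \(\C[p+q]/W\cong\C[q]\) (the \(\GL{p}\)-factor acting trivially), and the \(\LieGL{q}\)-block \(\gamma^I_J\) of the prolongation \(1\)-form restricts, on a \(C^{\infty}\) \(G_0\)-reduction \(\qE\cong s^*\prol{E}\), to a \((1,0)\)-connection on \(N\) — the partial holomorphic connection along the leaves. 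By Lemma~\ref{lemma:main} the Dolbeault characteristic class of \(N\) attached to a \(\GL{q}\)-invariant homogeneous polynomial \(P\) of degree \(k\) — equivalently, to a degree-\(k\) polynomial in the Chern classes of \(N\) — is the class of the \((k,k)\)-part of the pullback of \(P(\nabla\gamma^I_J,\dots,\nabla\gamma^I_J)\), where \(\nabla\gamma^I_J\defeq d\gamma^I_J+\gamma^I_K\wedge\gamma^K_J\). So the whole argument comes down to the shape of this curvature form.

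The key point is that, modulo the curvature terms of the structure equations (which are of bidegree \((2,0)\) and so invisible to the \((k,k)\)-part, exactly as in Lemma~\ref{lemma:main}),
\[
\nabla\gamma^I_J=-\varpi^I_{JK}\wedge\sigma^K,
\]
the sum running over the \(q\) transverse coframe indices only. Starting from the absorbed structure equations with \(t=0\) and the prolongation relations one gets \(\nabla\gamma^I_J=-\varpi^I_{Jk}\wedge\sigma^k-\varpi^I_{JK}\wedge\sigma^K\) with \(k\) a leaf index, and the first block drops: an element \(T\) of \(\prol\LieG\subset\LieG\otimes V^*\) is a symmetric map \(V\to\LieG\) whose \(\LieGL{q}\)-block sees only the \(\C[q]\)-summand of \(V\), and comparing the coefficients of \(v^kw^J\) and \(w^kv^J\) in \(T(v)w=T(w)v\) (with \(k\) a leaf index, \(J\) transverse) forces \(T^I_{Jk}=0\), so \(\varpi^I_{Jk}\equiv 0\) on the prolongation. (This is the representation-theoretic trace of the classical leafwise flatness of the Bott connection.) Thus each factor \(\nabla\gamma^I_J\) carries exactly one of the \(q\) transverse \(1\)-forms \(\sigma^K\).

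The rest is pigeonhole, as in the Engel example. For \(P\) of degree \(k\), the form \(P(\nabla\gamma^I_J)\) is a sum of monomials each wedging \(k\) factors of the shape \((\text{a }1\text{-form})\wedge\sigma^K\), hence carrying \(k\) of the \(q\) forms \(\sigma^K\); for \(k\ge q+1\) some \(\sigma^K\) repeats and the monomial is zero, so \(P(\nabla\gamma^I_J)=0\), and by Lemma~\ref{lemma:main} every polynomial in the Chern classes of \(N\) of degree exceeding \(q\) vanishes in Dolbeault cohomology — the Baum--Bott vanishing. For the Chern--Simons classes: vanishing of \(P(\nabla\gamma^I_J)\) makes the transgression \(P(\gamma^I_J,\nabla\gamma^I_J,\dots,\nabla\gamma^I_J)\) closed (as with \(\CS{(2a+b)^2}\) in the Engel example), and, modulo terms of higher holomorphic degree that do not affect the Dolbeault class, each of its monomials wedges one connection \(1\)-form \(\gamma^I_J\) with \(k-1\) curvature factors, hence carries \(k-1\) of the \(q\) forms \(\sigma^K\); for \(k\ge q+2\) this still exceeds \(q\), forcing \(\CS{P}[E]=0\) and the vanishing of its Dolbeault Chern--Simons class — the Kamber--Tondeur refinement.

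I expect the real work to be bookkeeping rather than ideas: carefully carrying out the symmetry computation that kills \(\varpi^I_{Jk}\), checking that torsion and the \((2,0)\) curvature corrections leave the relevant \((k,k)\)- and odd-degree Dolbeault classes untouched (which is what Lemma~\ref{lemma:main} packages, but must be applied correctly here), and confirming that the Chern--Simons transgression genuinely collapses to the single-transgression expression used above, so that no stray \(\lb{\gamma}{\gamma}\)-type terms survive to spoil the count.
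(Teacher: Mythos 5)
Your proposal is correct and follows essentially the same route as the paper: the stabilizer $G$-structure with the zero anchor, the prolongation computation forcing the normal-bundle curvature to be $-\varpi^I_{JK}\wedge\sigma^K$ with only the $q$ transverse coframe forms, the pigeonhole count killing polynomials of degree $>q$ and transgressions of degree $\ge q+2$, and the appeal to the general lemma to land in Dolbeault cohomology. You are in fact somewhat more explicit than the paper, which leaves the symmetry argument $\varpi^I_{Jk}=0$ to the reader and writes the Chern--Simons form as the single term $P(\gamma^I_J,\nabla\gamma^I_J,\dots,\nabla\gamma^I_J)$ without addressing the $\lb{\gamma}{\gamma}$-terms you flag.
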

Again, we stress that this theorem is a direct consequence of the linear algebra computation of \(\prol\LieG\) for \(G \subset \GL{p+q}\) the stabilizer of a \(p\)-dimensional linear subspace.

\section{Example: Baum--Bott with volume form}
We want to see our theory give rise to new results similar to the Baum--Bott vanishing theorem.
If \(F\) is a holomorphic foliation on a complex manifold \(M\) equipped with a holomorphic volume form, we can write the foliation and volume form together as a holomorphic \(G\)-structure where \(G \subset \SL{p+q}\) is the group of unimodular complex linear transformations preserving \(\C[p]+0\subset\C[p+q]\).
Calculate \(\prol\LieG\) to see that, in the standard flat geometry, the expression \(a=-\omega^i_{jk} \wedge σ^k\) becomes
\[
-
\begin{pmatrix}
ϖ{jk}^i \wedge σ^k + ϖ{jK}^i \wedge σ^K & 
ϖ{Jk}^i \wedge σ^k + ϖ{JK}^i \wedge σ^K 
\\
0 &
\pr{ϖ{JK}^I-\frac{1}{q+1}\pr{\delta^I_J ϖ{iK}^i+\delta^I_K ϖ{iJ}^i}} \wedge σ^K 
\end{pmatrix}
\]
where \(ϖ{ij}^i=0\) and \(ϖ{jk}^i=ϖ{kj}^i\) and \(ϖ{IJ}^I=0\)  and \(ϖ{JK}^I=ϖ{KJ}^I\).
Therefore if we write
\[
c_1(TF) = ϖ{iK}^i\wedge σ^K
\]
then the \((1,1)\)-part of this form descends to Dolbeault cohomology to represent \(c_1(TF)\).
Clearly as above
\begin{align*}
c_1(TF)^{q+1}=0,\\
T_{c_1(TF)}^{q+2}=0
\end{align*}
in addition to the results we saw previously from the Baum--Bott theorem.

\section{Example: scalar conservation laws}
Bryant, Griffiths and Hsu \cite{Bryant/Griffiths/Hsu:1995} constructed out of any scalar conservation law an equivalent \(G\)-structure. 
Their \(G\)-structure has structure equations (in a slight alteration of their notation)
\[
 d
\begin{pmatrix}
σ^1 \\
σ^2 \\
σ^3
\end{pmatrix}
= 
-\begin{pmatrix}
2 ω1 & 0 & 0 \\
0 & ω1 & 0 \\
0 & ω2 & - ω1
\end{pmatrix}
\wedge 
\begin{pmatrix}
σ^1 \\
σ^2 \\
σ^3
\end{pmatrix}
+
\begin{pmatrix}
K σ^2 \wedge σ^3 \\
σ^1 \wedge σ^3  \\
0
\end{pmatrix}.
\]
They consider a real scalar conservation law.
We will consider a holomorphic scalar conservation law, for which exactly the same derivation yields a holomorphic \(G\)-structure.
In our notation,
\begin{align*}
σ&=
\begin{pmatrix}
σ^1 \\
σ^2 \\
σ^3
\end{pmatrix},
\\
γ&=
\begin{pmatrix}
2 γ_1 & 0 & 0 \\
0 & γ_1 & 0 \\
0 & γ_2 & - γ_1
\end{pmatrix},
\\
γ_0&=
\begin{pmatrix}
2 γ_1 & 0 & 0 \\
0 & γ_1 & 0 \\
0 & 0 & - γ_1
\end{pmatrix},
\end{align*}
where \(γ_0\) is the projection to the Lie algebra of the maximal reductive subgroup.
Take the prolongation of the Lie algebra to get
\[
dγ+γ\wedge γ
=
-\begin{pmatrix}
0 & 0 & 0 \\
0 & 0 & 0 \\
0 & ϖ{32}^3 \wedge σ^2 & 0
\end{pmatrix}
=-ϖ{}\wedge σ,
\]
modulo torsion.
Clearly all Chern classes of the tangent bundle vanish, so a complex \(3\)-manifold which is the phase space of a holomorphic conservation law has all Chern classes of its tangent bundle vanish in Dolbeault cohomology.
It is not clear whether the Atiyah class of the tangent bundle vanishes, but \(a(M,T)^2=0\).
There is a trivial characteristic class \(dω1=0\) in Dolbeault cohomology, so that the various invariant subbundles of the tangent bundle, forming a flag, have holomorphic connections on their associated graded line bundles.
There is a possibly nontrivial characteristic class \(dω2=-\omega^3_{32}\wedge σ^2\), vanishing on the leaves of \(0=σ^2=σ^3\), so on those leaves, the tangent bundle of the \(3\)-fold \(M\) pulls back to split into a direct sum, with a holomorphic affine connection.

Clearly the Chern--Simons classes in Dolbeault cohomology \(\CS{c_1},\CS{c_2},\CS{c_3}\) of the tangent bundle all vanish as well.
For example,
\begin{align*}
(-2\pi i)^2\CS{c_2}
&=
\omega^i_j\wedge\nabla\omega^j_i,
\\
&=
-
\operatorname{tr}
\begin{pmatrix}
2 γ1 & 0 & 0 \\
0 & γ1 & 0 \\
0 & γ2 & -γ1
\end{pmatrix}
\wedge
\begin{pmatrix}
0 & 0 & 0 \\
0 & 0 & 0 \\
0 & ϖ{32}^3 \wedge σ^2 & 0
\end{pmatrix},
\\
&=
0.
\end{align*}

\section{Example: projective Baum--Bott}
We modify the discussion above to consider a holomorphic foliation with transverse normal projective connection.
Take a rank \(p\) holomorphic foliation on a complex manifold \(M\) of dimension \(p+q\), with a transverse holomorphic projective connection.
From the Baum--Bott theory, every polynomial in Chern classes of degree (as a differential form) exceeding \(q+1\) vanishes in Dolbeault cohomology.
We leave the reader to justify the structure equations:
\[
\nabla
\begin{pmatrix}
σ^i \\
σ^I
\end{pmatrix}
=
0
\]
and
\[
\nabla
\begin{pmatrix}
γ^i_j & γ^i_J \\
0 & γ^I_J
\end{pmatrix}
=
-\begin{pmatrix}
ϖ{jk}^i \wedge σ^k+ϖ{jK}^i \wedge σ^K & 
ϖ{Jk}^i \wedge σ^k+ϖ{JK}^i \wedge σ^K \\
0 & 
\pr{\delta^I_J ϖK + \delta^I_K ϖJ} \wedge σ^K
\end{pmatrix}
\]
modulo torsion.

The normal bundle of the foliation is the associated vector bundle associated the the representation
\[
\rho
\begin{pmatrix}
γ^i_j & γ^i_J \\
0 & γ^I_J
\end{pmatrix}
= γ^I_J.
\]
The Atiyah class of the normal bundle is therefore represented in Dolbeault cohomology by
\begin{align*}
\nabla γ^I_J 
&= 
dγ^I_J + γ^I_K \wedge γ^K_J,
\\
&= - \delta^I_J ϖK \wedge σ^K - ϖJ \wedge σ^I
\end{align*}
modulo torsion.

The example of \(M=\C[p] \times \Proj{q}\) with the obvious foliation and tranverse projective connection obviously has translations and projective transformations acting transitively on the total space of every prolongation.
Looking at the example, the Atiyah class of the normal bundle is just computed precisely as the Atiyah class of the tangent bundle of \(\Proj{q}\), since it only involves the \(1\)-forms with capital letter superscripts and subscripts.
In particular, besides the results from the Baum--Bott theorem, we find:
\begin{theorem}
Take a holomorphic foliation \(F\) of rank \(p\) on a complex manifold \(M\) of complex dimension \(p+q\), with transverse holomorphic projective connection.
If \(N\) is the normal bundle, then, in Dolbeault cohomology:
\[
i(q+1)a(N) = 2\pi I \otimes c_1(N) + 2\pi c_1(N) \otimes I
\]
and
\[
\binom{q+1}{k}c_1^k(N)
=
(q+1)^k c_k(N).
\]
\end{theorem}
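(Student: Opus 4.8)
The plan is to read both identities off the structure equations already written down for this geometry, with no further geometric input. Abbreviate $\theta \defeq \varpi_K\wedge\sigma^K$ and let $a=(a^I_J)$ be the Atiyah form of the normal bundle $N$: from the last displayed structure equation $a^I_J = \nabla\gamma^I_J = -(\delta^I_J\varpi_K+\delta^I_K\varpi_J)\wedge\sigma^K = -\delta^I_J\theta - \varpi_J\wedge\sigma^I$ modulo torsion, and by Lemma~\ref{lemma:main} its class in Dolbeault cohomology is $a(N)$ — the torsion, curvature and transition corrections are all of type $(2,0)$, hence irrelevant to a $(1,1)$-class. Each $a^I_J$ and $\theta$ is a $2$-form, so central in the exterior algebra, and the matrix manipulations below (products, trace, determinant) are therefore unambiguous.

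For the first identity, take the trace: $\operatorname{tr} a = -q\theta-\theta = -(q+1)\theta$, so $c_1(N) = \tfrac{i}{2\pi}\operatorname{tr} a = -\tfrac{i(q+1)}{2\pi}\theta$. Setting $(c_1)_K \defeq -\tfrac{i(q+1)}{2\pi}\varpi_K$, a representative of $c_1(N)$ carrying one conormal index so that $c_1(N) = (c_1)_K\wedge\sigma^K$ (the whole computation sees only the capital-index forms, i.e. the conormal directions of $F$), one gets
\[
i(q+1)\,a^I_J = -i(q+1)(\delta^I_J\varpi_K+\delta^I_K\varpi_J)\wedge\sigma^K = 2\pi\,\delta^I_J\,(c_1)_K\wedge\sigma^K + 2\pi\,\delta^I_K\,(c_1)_J\wedge\sigma^K,
\]
which, reading $\nForms{1}{M}\otimes\operatorname{End}(N)$ along the conormal directions as $N^*\otimes N^*\otimes N$ with its two $N^*$-slots symmetrized, is precisely $i(q+1)a(N) = 2\pi\,I\otimes c_1(N) + 2\pi\,c_1(N)\otimes I$.

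For the second identity, I first prove the algebraic relation $a^2 = -\theta\,a$ by reordering one-forms (using that $\theta$ is central and that the $\sigma$'s and $\varpi$'s anticommute): expanding $a^I_J a^J_L$ and summing over $J$ yields $\delta^I_L\theta^2 + 2\theta\,\varpi_L\wedge\sigma^I + \sum_J\varpi_J\wedge\sigma^I\wedge\varpi_L\wedge\sigma^J$, and the last sum equals $\theta\wedge\sigma^I\wedge\varpi_L = -\theta\,\varpi_L\wedge\sigma^I$, so $(a^2)^I_L = \theta(\delta^I_L\theta+\varpi_L\wedge\sigma^I) = -\theta\,a^I_L$. Hence $a^n=(-\theta)^{n-1}a$, so the Newton power sums are $\operatorname{tr}(a^n) = -(q+1)(-\theta)^{n-1}\theta = (q+1)(-1)^n\theta^n$ for all $n\ge 1$ — exactly the power sums of $q+1$ copies of $-\theta$ (the "$(q+1)$-st eigenvalue" $-\theta$ being invisible to the symmetric functions because $\theta^{q+1}=0$). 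Newton's identities over a $\mathbb{Q}$-algebra then give the elementary symmetric functions $e_n(a) = \binom{q+1}{n}(-\theta)^n$, whence
\[
c(N) = \det\!\left(I+\tfrac{i}{2\pi}a\right) = \sum_n\binom{q+1}{n}\!\left(-\tfrac{i}{2\pi}\theta\right)^n = \left(1-\tfrac{i}{2\pi}\theta\right)^{q+1} = \left(1+\tfrac{c_1(N)}{q+1}\right)^{q+1},
\]
using $c_1(N) = -\tfrac{i(q+1)}{2\pi}\theta$; extracting the degree-$k$ part gives $c_k(N) = \binom{q+1}{k}c_1(N)^k/(q+1)^k$, i.e. $\binom{q+1}{k}c_1^k(N) = (q+1)^k c_k(N)$. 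This is the same perfect power as $c(T\Proj{q}) = (1+h)^{q+1}$ coming from the Euler sequence, consistent with the model $M=\C[p]\times\Proj{q}$ noted above.

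The only real delicacy is the sign in $a^2 = -\theta\,a$, i.e. correctly carrying the anticommutation of $\sigma^K$ and $\varpi_K$ inside $\sum_J\varpi_J\wedge\sigma^I\wedge\varpi_L\wedge\sigma^J$; once that single line is right the rest is formal, bearing in mind that all equalities above are to be read after pulling back by a $C^{\infty}$ section and taking $(1,1)$-parts, which is exactly what Lemma~\ref{lemma:main} licenses.
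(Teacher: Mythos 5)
Your proposal is correct and follows the same route as the paper: take the structure-equation representative $a^I_J=-\delta^I_J\varpi_K\wedge\sigma^K-\varpi_J\wedge\sigma^I$ of the Atiyah class of $N$ (licensed by Lemma~\ref{lemma:main}, with torsion/curvature terms of type $(2,0)$ dropped) and read both identities off it. The paper merely asserts that ``plugging in the structure equations, they pop out,'' citing the model $\C[p]\times\Proj{q}$; your trace computation for the first identity and the relation $a^2=-\theta a$ with Newton's identities for the second are exactly the omitted verification, carried out correctly.
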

These equations hold for the tangent bundle of \(\Proj{q}\), and so we correctly predict them here; plugging in the structure equations, they pop out.
\section{Example: split tangent bundle}
\begin{theorem}
Suppose that \(G\subset\GL{n}\) is a reductive linear algebraic group.
Take any polynomial which vanishes on the infinitesmal characteristic forms of \(G\) as defined in \vref{subsec:inf.char}.
Then that polynomial vanishes on the Chern classes in Dolbeault cohomology of any complex manifold which admits a holomorphic \(G\)-structure.
\end{theorem}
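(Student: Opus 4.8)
The plan is to reduce the statement to \vref{lemma:main}, which already expresses the Dolbeault Chern classes of the relevant bundle through the structure forms of the \(G\)-structure; the one new ingredient is that ``evaluation on the structure forms'' is a ring homomorphism, so any polynomial identity holding for the representation-theoretic data \(\prol\LieG,V\) persists verbatim among the representing differential forms. First I would use reductivity. Since \(G\) is reductive, the Langlands decomposition of \(G\rtimes\prol\LieG\) may be taken with \(G_0=G\) and \(G_+=\prol\LieG\), a complex vector group (hence connected, simply connected, solvable), so \(\LieG_0=\LieG\) and the canonical projection \(\LieG\to\LieG_0\) is the identity; and by reductivity the intrinsic torsion lifts to an anchor, so every holomorphic \(G\)-structure \(E\to M\) prolongs, \(\prol{E}\to E\to M\). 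Here \(\qE\defeq\prol{E}/G_+=\prol{E}/\prol\LieG=E\), since \(\prol{E}\to E\) is a principal \(\prol\LieG\)-bundle, and the structure map \(E\to FM\) identifies \(\vb{V}=\amal{E}{G}{V}\) with \(TM\). I would take \(W\defeq V\) with \(\rho_V\) the given embedding \(G\hookrightarrow\GL{V}\), so that the infinitesimal Chern forms \(c_k\in\Sym{k}{\prol\LieG\otimes V}^*\) of \(G\) are precisely the forms whose Dolbeault classes, via \vref{lemma:main}, are \(c_k(TM)\).

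Next I would set up the ring homomorphism. Fix a \(C^{\infty}\) \(G_0\)-reduction, which (as in the section on smooth reduction) gives a \(C^{\infty}\) \((1,0)\)-connection \(\gamma\) on \(\qE\to M\), and cover \(\prol{E}\) by charts \(\prol{E}_a\) carrying prolongation pseudoconnection forms \(\varpi_a\). With \(\sigma\) the soldering form, \(\alpha_a\defeq\varpi_a^{0,1}\wedge\sigma\) is a \((1,1)\)-form valued in \(\prol\LieG\otimes V\), and since it has even form-degree its scalar components commute under wedge product; hence
\[
\Theta_a\colon f\in\Sym{k}{\prol\LieG\otimes V}^*\ \longmapsto\ f(\alpha_a,\dots,\alpha_a)\ \in\ \Omega^{k,k}
\]
(\(k\) copies of \(\alpha_a\)) is a homomorphism of graded \(\C\)-algebras into the commutative ring of diagonal forms. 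Unwinding the definition of the Atiyah form \(a_V\) and of \(c_k\), and using Cartan's structure equations, \(\Theta_a(c_k)\) is \(c_k\) of the \(\LieGL{V}\)-valued \((1,1)\)-form \(-\varpi_a^{0,1}\wedge\sigma\) (with \(\varpi_a\), valued in \(\prol\LieG\subset\LieG\otimes V^*\), paired against \(\sigma\)). The curvature terms \(k_a\sigma^2\) and transition terms \(p_{ab}\sigma^2\) are of type \((2,0)\) and contribute nothing to a \((k,k)\)-form, while \(\varpi_b-\varpi_a=p_{ab}\sigma\) is of type \((1,0)\), so \(\varpi_a^{0,1}=\varpi_b^{0,1}\) on overlaps; thus, by the computation in the proof of \vref{lemma:main}, the \(\Theta_a(c_k)\) are the chartwise expressions of the \((k,k)\)-component \(\tau_k\) of the \(k^{\text{th}}\) Chern--Weil form of \(\gamma\) --- a global \(\bar\partial\)-closed \((k,k)\)-form on \(M\) representing \(c_k(TM)\) in Dolbeault cohomology.

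Finally I would apply \(\Theta_a\) to the hypothesis. A polynomial relation splits into its weighted-homogeneous components, each in one summand \(\Sym{d}{\prol\LieG\otimes V}^*\), so I may assume \(P\) weighted-homogeneous with \(P(c_1,c_2,\dots)=0\) in \(\Sym{\bullet}{\prol\LieG\otimes V}^*\). Then, since \(\Theta_a\) is a ring homomorphism,
\[
P(\tau_1,\tau_2,\dots)|_{U_a}=P\bigl(\Theta_a(c_1),\Theta_a(c_2),\dots\bigr)=\Theta_a\bigl(P(c_1,c_2,\dots)\bigr)=0
\]
on each chart, hence \(P(\tau_1,\tau_2,\dots)=0\) on \(M\); and since the cup product on Dolbeault cohomology is computed by wedging \(\bar\partial\)-closed representatives, \(P\bigl(c_1(TM),c_2(TM),\dots\bigr)=\bigl[P(\tau_1,\tau_2,\dots)\bigr]=0\), as claimed. (The same argument with any \(\LieG_0\)-module \(W\) in place of \(V\) shows polynomial relations among the \(c_k\) built from \(a_W\) descend to the Dolbeault Chern classes of the --- possibly ghostly --- bundle \(\vb{W}\).)

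I expect the main obstacle to be the Dolbeault-bidegree bookkeeping underlying \vref{lemma:main}: one must check that every curvature and transition correction in Cartan's structure equations is of type \((2,0)\), so that they all drop out both of the passage to the \((k,k)\)-part and of the comparison on overlaps. Once that is in hand, ``evaluate on the structure forms'' genuinely lands in the ring of global, \(\bar\partial\)-closed, diagonal forms and is a ring homomorphism, and the transfer of the relation is formal.
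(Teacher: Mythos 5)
Your proposal is correct and follows essentially the same route as the paper: reductivity guarantees an anchor (lifting the torsion by splitting into irreducible \(G\)-modules), and then Lemma~\ref{lemma:main} transfers the infinitesimal characteristic-form relation to the Dolbeault classes. Your ring-homomorphism and bidegree bookkeeping merely makes explicit what the paper's two-line proof leaves implicit in its appeal to that lemma.
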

\begin{proof}
Every holomorphic \(G\)-structure is anchored, by splitting \(\LieG \otimes \C[n*]\) into irreducible \(G\)-modules.
We fix one such anchoring of all \(G\)-structures and apply lemma~\vref{lemma:main}.
\end{proof}
We recover \cite{Beauville:2000} p. 3, Lemma 3.1; our proof is longer, but only because we wish to be very explicit in computing the structure equations.
\begin{theorem}
Suppose that \(M\) is a complex manifold and that \(V\subset TM\) is a holomorphic direct summand of \(TM\).
Then the Atiyah class \(a_V\) of \(V\) lies in \(\cohomology{1}{M,V^*\otimes V^*\otimes V}\).
In particular, every class in \(\cohomology{r}{M,\Omega^r}\) given by a polynomial in the Chern classes of \(V\) in Dolbeault cohomology vanishes for \(r\) exceeding the rank of \(V\). 
\end{theorem}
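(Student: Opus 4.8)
The plan is to produce, from a holomorphic complement of $V$, a holomorphic partial connection on $V$ in the directions of that complement; this kills the component of the Atiyah class transverse to $V^*$, and the statement about Chern classes then follows because $\Lm*{r}{V^*}$ vanishes as soon as $r$ exceeds the rank of $V$.

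First I fix a holomorphic splitting $TM = V \oplus W$ and write $\operatorname{pr}_V\colon TM \to V$ for the projection. Dually, $\Omega^1_M = V^* \oplus W^*$ as holomorphic bundles, so that $\cohomology{1}{M,\Omega^1_M\otimes\operatorname{End}V} = \cohomology{1}{M,V^*\otimes\operatorname{End}V}\oplus\cohomology{1}{M,W^*\otimes\operatorname{End}V}$, and if $i\colon W\hookrightarrow TM$ is the inclusion then the induced restriction map $i^*\colon\cohomology{1}{M,\Omega^1_M\otimes\operatorname{End}V}\to\cohomology{1}{M,W^*\otimes\operatorname{End}V}$ is precisely the projection to the second summand. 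Recall from \cite{Atiyah1957} that $a_V$ is the extension class of the Atiyah sequence $0\to\operatorname{End}V\to\At[V]\to TM\to 0$, and that $i^*(a_V)$ is the class of the pulled-back extension $0\to\operatorname{End}V\to\At[V]\times_{TM}W\to W\to 0$, whose vanishing is equivalent to the existence of a holomorphic partial connection on $V$ along $W$.

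The key step is that such a partial connection always exists, with no hypothesis on $W$ (in particular $W$ need not be integrable): for holomorphic local sections $w$ of $W$ and $s$ of $V$ set
\[
D_w(s)\defeq\operatorname{pr}_V[w,s]\in V .
\]
Using $[fw,s]=f[w,s]-(sf)w$ together with $\operatorname{pr}_V(w)=0$ gives $D_{fw}(s)=fD_w(s)$, so $D$ is $\mathcal O_M$-linear in $w$; using $[w,fs]=f[w,s]+(wf)s$ together with $\operatorname{pr}_V(s)=s$ gives $D_w(fs)=fD_w(s)+(wf)s$, the Leibniz rule in $s$. Hence $D$ is a holomorphic splitting of $\At[V]\times_{TM}W\to W$, so $i^*(a_V)=0$, i.e.\ $a_V$ lies in the summand $\cohomology{1}{M,V^*\otimes\operatorname{End}V}=\cohomology{1}{M,V^*\otimes V^*\otimes V}$. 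This is the coordinate-free shadow of the $G$-structure computation: for $G=\GL{p}\times\GL{q}$, with Langlands decomposition $G_0=G$ and $G_+=1$, one computes $\prol\LieG=\Sym{2}{\C[p*]}\otimes\C[p]\oplus\Sym{2}{\C[q*]}\otimes\C[q]$ with \emph{no} mixed components, so by Lemma~\ref{lemma:main} the Atiyah form of $\vb{V}$ plugged into Cartan's structure equations involves only the coframe $\sigma^i$ spanning $V^*\subset\Omega^1_M$, never the $\sigma^I$.

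Finally, the ``in particular'' is formal: a class in $\cohomology{r}{M,\Omega^r_M}$ that is a polynomial in the Chern classes of $V$ is obtained by applying an invariant polynomial on $\operatorname{End}V$ to the $r$-fold cup product $a_V\smile\dots\smile a_V$; since each factor lies in $\cohomology{1}{M,V^*\otimes\operatorname{End}V}$, the $\Omega^1_M$-factors antisymmetrize into the image of $\Lm*{r}{V^*}\hookrightarrow\Lm*{r}{\Omega^1_M}=\Omega^r_M$, which vanishes once $r>\operatorname{rank}V$. I expect the only delicate point to be the bookkeeping in this last step — tracking how the coefficient bundles multiply under the cup product so that the form part really lands in $\Lm*{r}{V^*}$ rather than in all of $\Lm*{r}{\Omega^1_M}$ — but this is forced by naturality of the cup product and the holomorphic splitting $\Omega^1_M=V^*\oplus W^*$; the rest is the elementary verification of the tensoriality of $D$.
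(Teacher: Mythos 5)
Your proof is correct, but it takes a genuinely different route from the paper. The paper proves this by its $G$-structure machinery: it takes $G=\GL{p}\times\GL{q}$, computes the prolongation $\prol\LieG=\prol{\LieGL{p}}\oplus\prol{\LieGL{q}}$, writes out Cartan's structure equations with torsion absorbed, and observes that the resulting Atiyah form $a_V=-\varpi^i_{jk}\wedge\sigma^k$ contains no capital-letter indices, so that any Dolbeault representative of a degree-$r$ Chern polynomial wedges together more than $p$ of the $1$-forms $\sigma^i$ and hence vanishes; this embeds the statement into the general characteristic-form framework, produces explicit representatives $-(\varpi^i_{jk})^{0,1}\wedge\sigma^k$, and handles $V$ and $W$ simultaneously. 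You instead give the classical coordinate-free argument: identify the $W^*$-component of $a_V$ (under $\Omega^1_M=V^*\oplus W^*$) with the obstruction to a holomorphic partial connection on $V$ along $W$, and kill it with the Bott-type operator $D_w s=\operatorname{pr}_V[w,s]$, whose tensoriality in $w$ and Leibniz rule in $s$ you verify correctly and which, as you note, needs no integrability of $W$; the vanishing of Chern polynomials of degree $r>\operatorname{rank}V$ then follows formally because the cup products factor through $\Lambda^r V^*=0$. This is essentially Beauville's own proof of his Lemma 3.1, which the paper explicitly says it recovers (``our proof is longer, but only because we wish to be very explicit in computing the structure equations''), so your argument is shorter and more elementary, at the cost of not exhibiting the explicit structure-equation representatives and the connection to the prolongation computation that the paper is illustrating; your parenthetical identification of $\prol\LieG$ with $\Sym{2}{\C[p]}^*\otimes\C[p]\oplus\Sym{2}{\C[q]}^*\otimes\C[q]$, with no mixed components, is the correct bridge between the two viewpoints.
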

\begin{proof}
Suppose that \(TM=V\oplus W\).
Let \(E\) be the set of pairs \((m,u)\) for \(m\in M\) and \(u\colon T_m M \to \C[n]\) a linear isomorphism taking \(V_m,W_m\) to some fixed complementary linear subspaces \(\C[p]\oplus 0, 0\oplus\C[q]\subset\C[n]\), \(n=p+q\), so \(E\) is a holomorphic \(G\)-structure where \(G=\GL{p}\times\GL{q}\subset\GL{n}\) is the set of linear transformations preserving those subspaces.
We prove the stronger result that the Atiyah form of the associated holomorphic \(G\)-structure lies in a direct sum.
(Roughly speaking, the computation of the Chern classes in Dolbeault cohomology proceeds, as in our general theory above, exactly as if there were no torsion.
For a splitting of the tangent bundle, this means as if the splitting \(V\oplus W\) were bracket closed, i.e. locally a product.)

Compute the prolongation:
\[
\prol\LieG=\prol{\LieGL{p}}\oplus\prol{\LieGL{q}}.
\]
The proof is then just to compute the pairing \(\prol\LieG \otimes \C[n]\to\LieG\) to see the Chern form equations.
To be more explicit, we work out the complete structure equations.
In indices
\begin{align*}
i,j,k,\ell&=1,2,\dots,p,\\
I,J,K,L&=p+1,p+2,\dots,p+q=n,
\end{align*}
write the structure equations:
\[
d
\begin{pmatrix}
\sigma^i\\
\sigma^I
\end{pmatrix}
+
\begin{pmatrix}
γ^i_j&0\\
0&γ^I_J
\end{pmatrix}
\wedge
\begin{pmatrix}
\sigma^j\\
\sigma^J
\end{pmatrix}
=
\frac{1}{2}
\begin{pmatrix}
t^i_{JK}\sigma^J\wedge\sigma^K\\
t^J_{jk}\sigma^j\wedge\sigma^k
\end{pmatrix},
\]
with anchor being the vanishing of all other torsion components, i.e. we have absorbed torsion.
Differentiating the structure equations yields relations on the \(1\)-torsion and \(2\)-torsion:
\[
0=-(dγ^i_j+γ^i_k\wedge γ^k_j+ϖ{jk}^i\wedge\sigma^k)\wedge\sigma^j
+
\frac{1}{2}
(
dt^i_{JK}
-t^i_{LK}γ^L_J
+t^i_{JL}γ^L_K
)\wedge\sigma^J\wedge\sigma^K.
\]
The Atiyah forms of \(V\) and of \(W=TM/V\) are:
\begin{align*}
a_V&=dγ^i_j+γ^i_k\wedge γ^k_j=-ϖ{jk}^i\wedge\sigma^k,\\
a_W&=dγ^I_J+γ^I_K\wedge γ^K_J=-ϖ{JK}^I\wedge\sigma^K,\\
\end{align*}
modulo \(1\)-torsion and \(2\)-torsion.
There are no capital letter indices in \(a_V\), which is the crucial observation.
The Atiyah class \(a_V\) in Dolbeault cohomology is represented by differences of these \(-(ϖ{jk}^i)^{0,1}\wedge\sigma^k\) across local holomophic sections of \(E\).
The \(\sigma^k\) on each pulls back to a local holomorphic section of \(V^*\), while \((ϖ{jk}^i)^{0,1}\) pulls back to a local smooth \((0,1)\) section of \(V^*\otimes V\).
If we wedge more than \(p\) of these together, we wedge together more than \(p\) \(\sigma^i\) \(1\)-forms, but there are only \(p\) of these.
\end{proof}
\section{Conclusion}
The reader can construct a purely holomorphic theory of \Slovak{} cohomology for \(G\)-structures, by imitation of the theory for Cartan geometries \cite{mckay2022}.

It remains to define \(G\)-structures on singular varieties, generalizing the theory of singular locally Hermitian symmetric varieties, on which some results about characteristic class invariants are known \cite{Mumford:1977} which generalize Hirzebruch's proportionality theorem.
\nocite{sagemath}
\bibliographystyle{amsplain}
\bibliography{chern-cartan-3}
\end{document}